\definecolor{zzttqq}{rgb}{0.6,0.2,0}
\renewcommand{\vec}[1]{{\boldsymbol{#1}}}
\newcommand{\set}[1]{\ensuremath{\mleft\{ #1 \mright\}}}
\newcommand{\apply}[1]{\ensuremath{\mleft( #1 \mright)}}
\newcommand{\tup}[2]{\mleft(#1_1,\ldots,#1_{#2}\mright)}
\newcommand{\tupone}[2]{(#1,#1,\ldots,\stackrel{#2}{#1})}
\newcommand{\tupdos}[4]{(#1,#1,\ldots,\stackrel{#2}{#1},#3,#3,\ldots,\stackrel{#4}{#3})}
\newcommand{\tuptres}[6]{(#1,\ldots,\stackrel{#2}{#1},#3,\ldots,\stackrel{#4}{#3},#5,\ldots,\stackrel{#6}{#5})}
\newcommand{\Lp}[1]{\ensuremath{\mathcal{L}_{#1}}}
\DeclareMathOperator{\Part}{Part}
\DeclareMathOperator{\Cl}{C}
\DeclareMathOperator{\Ss}{S}
\DeclareMathOperator{\PP}{P}
\DeclareMathOperator{\NP}{NP}
\DeclareMathOperator{\NS}{NS}
\DeclareMathOperator{\lo}{o}
\newcommand{\Na}{\mathbb{N}}
\newcommand{\Nplus}{\mathbb{N}_{+}}
\newcommand{\Pa}{\mathbb{P}}
\newcommand{\Qa}{\mathbb{Q}}
\newcommand{\MMM}{\mathbb{M}_{3}}
\newcommand{\NNNNN}{\mathbb{N}_{5}}
\begin{document}
\def\doi#1{\url{https://doi.org/#1}}%
\title{Representing partition lattices through FCA\thanks{The third author gratefully acknowledges financial support by the Asociaci\'on Mexicana de Cultura A.C. She also would like to thank Prof.\ Bernhard Ganter for pointing out the topic and Dr. Christian Meschke for his constant support.}}
%
%
\author{Mike Behrisch\inst{1}\orcidID{0000-0003-0050-8085}\and Alain Chavarri Villarello\inst{2}\and\\
Edith Vargas-Garc\'\i a\inst{2}\orcidID{0000-0001-9677-9087}}
%
\authorrunning{M.~Behrisch\and A.\,Chavarri Villarello\and E.~Vargas-Garc\'\i{}a}
%
\institute{Institute of Discrete Mathematics and Geometry, Technische Universit\"at Wien\\
\email{behrisch@logic.at}
\and Department of Mathematics, ITAM R\'\i o Hondo, Ciudad de M\'exico\\
\email{\{achavar3,edith.vargas\}@itam.mx}}
\maketitle              
\begin{abstract}
We investigate the standard context, denoted by $\mathbb{K}\left(\mathcal{L}_{n}\right)$, of the lattice~$\mathcal{L}_{n}$ of partitions of a positive  integer~$n$ under the dominance order. Motivated by the discrete dynamical model to study integer partitions by Latapy and Duong Phan and by the characterization of the supremum and (infimum) irreducible partitions of~$n$ by Brylawski, we show how to construct the join-irreducible elements of $\mathcal{L}_{n+1}$ from~$\mathcal{L}_{n}$.  We employ this construction to count the number of join-irreducible elements of~$\mathcal{L}_{n}$, and show that the number of objects (and attributes) of $\mathbb{K}\left(\mathcal{L}_{n}\right)$ has order $\Theta(n^2)$. 

\keywords{Join-irreducibility \and Standard context \and Integer partition.}
\end{abstract}
\section{Introduction}\label{sect:intro}
The study of \emph{partitions} of an integer started to gain attention in 1674 when Leibniz investigated~\cite[p.~37]{mahnke1912leibniz} the number of ways one can write a positive integer~$n$ as a sum of positive integers in decreasing order, which he called `divulsiones', today known as (unrestricted) partitions, see~\cite{OEISA000041}. He observed that there are~$5$ partitions of the number~$4$, namely the partitions $4, 3+1, 2+2, 2+1+1$ and $1+1+1+1$; for the number~$5$ there are~$7$ partitions, for~$6$ there are~$11$ partitions etc., and so he asked about the number~$p(n)$ of partitions of a positive integer~$n$. In 1918 G.\,H.~Hardy and S.~Ramanujan in~\cite{hardy1918} published an asymptotic formula to count~$p(n)$. To our knowledge, until now, there is `no closed-form expression' known to express~$p(n)$ for any integer~$n$.

From~\cite{Brylawski}, it is known that the set of all partitions of a positive integer~$n$ with \emph{the dominance ordering} (defined in the next section)  is a \emph{lattice}, denoted by~$\Lp{n}$. Moreover, Brylawski proposed a dynamical approach to study this lattice, which is explained in a more intuitive form by Latapy and Duong Phan in~\cite{LATAPY}. Motivated by their method to construct $\Lp{n+1}$ from~$\Lp{n}$, we restrict their approach  to the join-irreducible elements of~$\Lp{n}$ and show how to construct the join-irreducible elements of $\Lp{n+1}$ from those of~$\Lp{n}$. Our second main result is to give a recursive formula for the number of join-irreducible elements of~$\Lp{n}$, and since partition conjugation is a lattice antiautomorphism making~$\Lp{n}$ autodual (see~\cite{Brylawski}), we also derive the number of meet-irreducible elements. Finally we prove that the number of objects (and attributes) of $\mathbb{K}\apply{\Lp{n}}$ has order $\Theta(n^2)$, and we show how to obtain the standard context $\mathbb{K}\apply{\Lp{n+1}}$ from $\mathbb{K}\apply{\Lp{n}}$, giving a near-optimal algorithm for~$\mathbb{K}(\Lp{n})$ of time complexity~$\Theta(n^4)$.

The sections of this paper should be read in consecutive order. The following one introduces some notation and prepares basic definitions and facts concerning lattice theory and formal concept analysis. In Section~\ref{sect:Ln-Ln+1} we explore the relation between the join-irreducible elements of the lattice $\Lp{n+1}$ and those of the lattice~$\Lp{n}$, and the final section concludes the task by counting the supremum irreducible elements of~$\Lp{n}$.

\section{Preliminaries}\label{sect:preliminaries}
\subsection{Lattices and partitions}
Throughout the text $\Na:=\set{0,1,2,\ldots}$ denotes the set of \emph{natural numbers} and $\Nplus:=\set{1,2,\ldots}$ denotes the set of \emph{positive integers}. Moreover, if~$P$ is a set and $\leq$ is a binary relation on~$P$, which is \emph{reflexive}, \emph{antisymmetric} and \emph{transitive},  then $\Pa:=\apply{P,\leq}$ is \emph{a partially ordered set}. We often identify~$\Pa$ and~$P$; for example, we write $x\in\Pa$ instead of~$x\in P$ when this is convenient. For such a partially ordered set~$\Pa$ and elements $a,b\in \Pa$, we say that~$a$ is \emph{covered} by~$b$ (or~$b$ \emph{covers}~$a$), and write $a \prec b$, if $a<b$ and $a\leq z <b$ implies $z=a$. Partially ordered sets~$\Pa$ and~$\Qa$ are \emph{(order-)isomorphic} if there is a bijective map $\varphi\colon\Pa \to \Qa$ such that $a\leq b$ in~$\Pa$ if and only if $\varphi(a)\leq \varphi(b)$ in~$\Qa$. Every such~$\varphi$ is called an \emph{order-isomorphism}.

  A \emph{lattice} is a partially ordered set~$\Pa$ such that any two elements~$a$ and~$b$ have a least upper bound, called \emph{supremum} of~$a$ and~$b$, denoted by $a\vee b$, and a greatest lower bound, called \emph{infimum} of~$a$ and~$b$, denoted by $a \wedge b$. Moreover, if supremum $\bigvee S$ and infimum $\bigwedge S$ exist for all $S\subseteq \Pa$, then~$\Pa$ is called a \emph{complete lattice}. Let~$\mathbb{L}$ be a lattice and $M \subseteq \mathbb{L}$. Then~$M$ is a (carrier set of a) \emph{sublattice}~$\mathbb{M}$ of~$\mathbb{L}$ if whenever $a,b\in M$, then $a\vee b \in M$ and $a\wedge b \in M$.

For a lattice~$\mathbb{L}$, we say that $a\in \mathbb{L}$ is \emph{join-irreducible}, denoted by $\vee$-irreducible,  if it is not \emph{the minimum element} $0_{\mathbb{L}}$ (provided such an element exists) and for every $b,c\in \mathbb{L}$ such that $a=b\vee c$ we can conclude
that $a=b$ or $a=c$. In particular for a finite lattice, the join-irreducible elements are those which cover precisely one element. Similarly, we say that $a\in \mathbb{L}$ is \emph{meet-irreducible}, denoted by $\wedge$-irreducible,  if it is not \emph{the maximum element} $1_{\mathbb{L}}$ (provided such an element exists) and for every $b,c\in \mathbb{L}$ such that $a=b\wedge c$
we can conclude that $a=b$ or $a=c$. In particular for a finite lattice, the meet-irreducible elements are those which are covered by precisely one element. For a lattice~$\mathbb{L}$,  we denote by $\mathcal{J}\apply{\mathbb{L}}$ and by $\mathcal{M}\apply{\mathbb{L}}$ the set of \emph{all join-irreducible elements}, and of \emph{all meet-irreducible elements} of~$\mathbb{L}$, respectively. As an example, the meet-irreducible elements of the lattices~$\NNNNN$ and $\MMM$ appear shaded in Figure~\ref{Fig.1}.

\begin{figure}[h]
\centering
			\tikzstyle{normal}=[circle,draw,inner sep=.075cm]
			\begin{tikzpicture}[scale=0.60]
			
			\node (11) at (-1.5,-4) {$\NNNNN$};
			\node[normal,label=right:{$1_{\mathbb{L}}$}] (1) at (0, -4) {};
			\node[normal,label=right:{$u$},fill=gray] (2) at (-1, -5) {};
			\node[normal,label=right:{$w$},fill=gray] (3) at (-1, -6) {};
			\node[normal,label=left:{$v$},fill=gray] (4) at (1, -5.5) {};
			\node[normal,label=right:{$0_{\mathbb{L}}$}] (5) at (0, -7) {};
			\draw (1) -- (2) --(3) --(5);
			\draw (1) -- (4) --(5);
			
			\node (11) at (-9,-4) {$\MMM$};
			\node[normal,label=right:{$1_\mathbb{L}$}] (6) at (-7, -4) {};
			\node[normal,label=right:{$p$},fill=gray] (7) at (-8.5, -5.5) {};
			\node[normal,label=right:{$q$},fill=gray] (8) at (-7, -5.5) {};
			\node[normal,label=right:{$r$},fill=gray] (9) at (-5.5, -5.5) {};
			\node[normal,label=right:{$0_\mathbb{L}$}] (10) at (-7, -7) {};
			\draw (6) -- (7) --(10);
			\draw (6) -- (8) --(10);
			\draw (6) -- (9) --(10);
			
			\end{tikzpicture}

\caption{Lattices $\MMM$ and $\NNNNN$ with meet-irreducible elements appearing shaded.}
\label{Fig.1}			
\end{figure}			

Our aim is to study the join-irreducible elements in the lattice of positive integer partitions. We thus define a partition formally.
\begin{definition}
An (ordered) \emph{partition} of a positive integer $n\in \Nplus$ is an~$n$-tuple $\vec{\alpha}:=\tup{a}{n}$ of natural numbers such that
\begin{align*}
a_1&\geq a_2\geq \ldots \geq a_n\geq 0& \text{and}&& n&=a_1+a_2+\ldots+a_n.
\end{align*}
If there is $k\in \set{1,\ldots,n}$ such that $a_k> 0$ and $a_i=0$ for all $i>k$, the partition $\vec{\alpha}$ can be written in the form $\apply{a_1, \ldots, a_k}$, whereby we delete the zeros at the end.
\end{definition}

For example, $(3,2,2,1,0,0,0,0)$ is a partition of~$8$ because ${3\geq 2\geq 2\geq 1\geq 0}$ and $3+2+2+1=8$. By deleting the zeros, we write more succinctly $(3,2,2,1)$ for this partition. Graphically, we can illustrate a partition using a diagram that has a ladder shape, which is known as a Ferrers diagram (cf.\ Figure~\ref{FigYoungDiagram}). Given a partition~$\vec{\alpha}$ of~$n$, it is possible to obtain \emph{the conjugated partition}~$\vec{\alpha}^*$ in the sense of~\cite{Brylawski} by reading its Ferrers diagram by rows, from bottom to top. This operation can also be seen as reflecting the Ferrers diagram along a diagonal axis. For instance, the partition $\apply{3,2,2,1}$ of $8=3+2+2+1$ has the Ferrers diagram presented in Figure~\ref{FigYoungDiagram}, and its conjugate consists of $4$~grains from the first row, $3$~grains from the second, and $1$~grain from the third. So we get the partition $\apply{4,3,1}$; its Ferrers diagram is also shown in Figure~\ref{FigYoungDiagram} below.
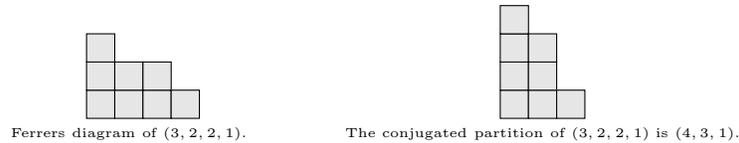
\begin{figure}[h]
\centering
			
			\tikzstyle{normal}=[rectangle,draw,inner sep=0.1875cm,fill=gray!20]
			
			\begin{tikzpicture}[x=.75cm,y=.75cm]
			\node[normal] (1) at (0, 0) {};
			\node[normal] (2) at (0, 0.5) {};
			\node[normal] (1) at (0, 1) {};
			\node[normal,label=below:\tiny{Ferrers diagram of $\apply{3,2,2,1}$.}] (1) at (0.5, 0) {};
			\node[normal] (1) at (0.5, 0.5) {};
			\node[normal] (1) at (1, 0) {};
			\node[normal] (1) at (1, 0.5) {};
			\node[normal] (1) at (1.5, 0) {};
			
			\begin{scope}[xshift=5.5cm]
			\node[normal] (1) at (0, 0) {};
			\node[normal] (2) at (0, 0.5) {};
			\node[normal] (1) at (0, 1) {};
			\node[normal] (1) at (0, 1.5) {};
			\node[normal,label=below:\tiny{The conjugated partition of $\apply{3,2,2,1}$ is $\apply{4,3,1}$.}] (1) at (0.5, 0) {};
			\node[normal] (1) at (0.5, 0.5) {};
			\node[normal] (1) at (0.5, 1) {};
			\node[normal] (1) at (1, 0) {};
			\end{scope}
			
			\end{tikzpicture}
			
\caption{Ferrers diagrams}
\label{FigYoungDiagram}
			
\end{figure}

The \emph{set of all partitions} of $n\in \Nplus$, denoted by $\Part(n)$, can be ordered in different ways. One of them is the dominance ordering, defined as follows.

\begin{definition}[\cite{Brylawski}]
Let $\vec{\alpha}=\tup{a}{k}$ and $\vec{\beta}=\tup{b}{m}$ in $\Part(n)$ be two partitions of $n\in \Nplus$. We define \emph{the dominance ordering} between $\vec{\alpha}$ and $\vec{\beta}$ by
\[\vec{\alpha}\geq \vec{\beta} \quad\text{if and only if}\quad \sum_{i=1}^j a_i \geq \sum_{i=1}^j b_i \text{ for all } j\geq 1. \]
\end{definition}

In~\cite[Proposition 2.2]{Brylawski} it is shown that the set $\Part(n)$ with the dominance ordering is 
a lattice. 
We denote by~$\Lp{n} =\apply{\Part(n),\leq}$ \emph{the lattice of all partitions} of $n \in \Nplus$ with the dominance ordering. Characterizing the covering relation is central for the construction of a finite lattice, and the following theorem provides this characterization in the case of~$\Lp{n}$.

\begin{theorem}[Brylawski~\cite{Brylawski}]\label{covering}
In the lattice~$\Lp{n}$, the partition $\vec{\alpha}=(a_1,\ldots,a_n)$ covers  $\vec{\beta}=(b_1,\ldots,b_n)$, denoted $\vec{\beta} \prec \vec{\alpha}$, if and only if either of the following two cases (not necessarily disjoint) is satisfied:
		\begin{enumerate}
			\item There exists $j\in \set{1,\ldots,n}$ such that $a_j=b_j+1$, $a_{j+1}=b_{j+1}-1$ and $a_i = b_i $ for all $i\neq j$ and $i\neq j+1$.
			\item There exist $j,k\in \set{1,\ldots,n}$ such that $a_j=b_j+1$, $a_k=b_k-1$, $b_j=b_k$ and $a_i=b_i$ for all $i\neq j$ and $i\neq k$.
		\end{enumerate}
\end{theorem}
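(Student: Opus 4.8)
\medskip
\noindent\emph{Proof proposal.}
I would pass to the \emph{partial-sum encoding}: attach to $\vec\alpha=(a_1,\dots,a_n)\in\Part(n)$ the integer sequence $s^{\vec\alpha}=(s^{\vec\alpha}_0,\dots,s^{\vec\alpha}_n)$ with $s^{\vec\alpha}_i:=\sum_{l\le i}a_l$, so $s^{\vec\alpha}_0=0$ and $s^{\vec\alpha}_n=n$. An integer sequence with these two end values equals $s^{\vec\alpha}$ for some partition exactly when it is non-decreasing and concave, $s_{i-1}+s_{i+1}\le 2s_i$ (i.e.\ its difference sequence is non-increasing), and under this encoding the dominance order becomes the coordinatewise order $s^{\vec\alpha}\ge s^{\vec\beta}$. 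Following the dynamical picture, I call a \emph{grain move from row $b$ to row $a$} (for $a<b$) the operation that lowers $b_b$ by one and raises $b_a$ by one; on partial sums it adds $1$ on the block $\{a,\dots,b-1\}$ and leaves everything else fixed, and it returns a partition precisely when $b_{a-1}>b_a$ (or $a=1$) and $b_b>b_{b+1}$ (or $b=n$). For partitions $\vec\alpha,\vec\beta$ of $n$, cases~1 and~2 of the theorem say exactly that $\vec\alpha$ is obtained from $\vec\beta$ by a grain move with $b=a+1$, respectively by one with $b>a$ and $b_a=b_{a+1}=\dots=b_b$.

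For ``$\Leftarrow$'', assume $\vec\alpha$ arises from $\vec\beta$ by such a move. A short partial-sum calculation gives $\vec\beta<\vec\alpha$, with $s^{\vec\alpha}=s^{\vec\beta}+1$ on an interval $B=\{j,\dots,k-1\}$ and $s^{\vec\alpha}=s^{\vec\beta}$ off $B$ ($B$ reducing to a single point when $b=a+1$). Any $\vec\gamma$ with $\vec\beta\le\vec\gamma\le\vec\alpha$ then satisfies $s^{\vec\gamma}=s^{\vec\beta}+\varepsilon$ with $\varepsilon_i\in\{0,1\}$ and $\varepsilon_i=0$ off $B$. Rewriting concavity of $s^{\vec\gamma}$ via $b_i-b_{i+1}=2s^{\vec\beta}_i-s^{\vec\beta}_{i-1}-s^{\vec\beta}_{i+1}$, the inequalities are automatic off $B$ and at its two borders (there they amount to $b_{j-1}>b_j$ and $b_k>b_{k+1}$, which hold because $\vec\alpha$ is a partition), while inside $B$ they become $2\varepsilon_i\ge\varepsilon_{i-1}+\varepsilon_{i+1}$; over $\{0,1\}$-valued sequences this forbids two consecutive entries inside $B$ from differing, so $\varepsilon$ is constant on $B$ and $\vec\gamma\in\{\vec\beta,\vec\alpha\}$. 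Hence $\vec\beta\prec\vec\alpha$.

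For ``$\Rightarrow$'', let $\vec\beta\prec\vec\alpha$. Set $j:=\min\{i:s^{\vec\alpha}_i>s^{\vec\beta}_i\}$ and $k:=\min\{i>j:s^{\vec\alpha}_i=s^{\vec\beta}_i\}$, which exists since $s^{\vec\alpha}_n=n=s^{\vec\beta}_n$. Since $\vec\alpha$ is a partition one checks $b_{j-1}>b_j$ and $b_k>b_{k+1}$, so the grain move from row $k$ to row $j$ is legal; it yields a partition $\vec\gamma$ with $\vec\beta<\vec\gamma$ and $\vec\gamma\le\vec\alpha$ (as $s^{\vec\alpha}>s^{\vec\beta}$ on all of $\{j,\dots,k-1\}$), so $\vec\gamma=\vec\alpha$. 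Thus $\vec\alpha$ differs from $\vec\beta$ only in $a_j=b_j+1$ and $a_k=b_k-1$. If $k=j+1$, this is case~1. If $k>j+1$, I would show $b_j=b_k$ (whence $b_j=b_{j+1}=\dots=b_k$ by monotonicity, i.e.\ case~2): were instead $b_j>b_k$, then $\vec\beta$ has a descent $b_p>b_{p+1}$ with $j\le p\le k-1$, and the legal grain move from row $p$ to row $j$ (if $p>j$) or from row $k$ to row $j+1$ (if $p=j$) produces a partition lying strictly between $\vec\beta$ and $\vec\alpha$, contradicting the covering.

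The step I expect to be the real obstacle is checking that all the auxiliary partitions invoked above are genuine: a modification of $\vec\beta$ is a partition only as long as its entries stay non-increasing, which forces every grain move (equivalently, every ``plateau'' added to the sequence $\varepsilon$) to have its endpoints at a strict descent of $\vec\beta$ or at the block borders $j-1,\,k$, where the inequalities $b_{j-1}>b_j$ and $b_k>b_{k+1}$, available for free from $\vec\alpha\in\Part(n)$, are exactly what rescues concavity. The boundary positions $j=1$ and $k=n$ only require the obvious conventions; everything else is routine bookkeeping with partial sums.
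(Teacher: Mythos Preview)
The paper does not prove this theorem; it is quoted from Brylawski~\cite{Brylawski} and used as a black box, so there is no in-paper argument to compare against. Your partial-sum approach is sound and in fact close in spirit to Brylawski's original treatment: encoding dominance as the coordinatewise order on concave sequences turns the covering question into a clean $\{0,1\}$-perturbation problem, and your two directions go through as written. In the ``$\Leftarrow$'' part the key observation that, inside the block $B$, concavity of $s^{\vec\beta}$ holds with equality (because $b_j=\dots=b_k$ in case~2) is exactly what reduces the constraint to $2\varepsilon_i\ge\varepsilon_{i-1}+\varepsilon_{i+1}$, forcing $\varepsilon$ constant; in the ``$\Rightarrow$'' part the choice of $j,k$ and the intermediate grain moves are correct, and the boundary inequalities $b_{j-1}>b_j$, $b_k>b_{k+1}$ indeed follow from $\vec\alpha\in\Part(n)$ via the partial-sum identities you indicate.

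One cosmetic point worth tightening: in the final contradiction argument you pick ``a descent $b_p>b_{p+1}$'' and then branch on $p>j$ versus $p=j$, but a reader might wonder why you are free to choose. It would read more cleanly to say: either some descent occurs at $p>j$ (then use the move $p\to j$), or else $b_{j+1}=\dots=b_k<b_j$ and you use the move $k\to j+1$. Either way the resulting partition has partial sums $s^{\vec\beta}+\mathbf{1}_C$ for a nonempty proper subinterval $C\subsetneq\{j,\dots,k-1\}$, hence lies strictly between $\vec\beta$ and $\vec\alpha$. This is already what you do; just make the case split explicit.
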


In~\cite{LATAPY} Matthieu Latapy and Ha Duong Phan give a dynamic approach to study the lattice~$\Lp{n}$ so that the dominance ordering and the covering relation can be visualized more intuitively. To this end they use the following definitions.
\begin{definition}[\cite{LATAPY}]
For a partition $\vec{\alpha}=\tup{a}{k}$, the \emph{height difference} of $\vec{\alpha}$ at~$j$, denoted by $d_j\apply{\vec{\alpha}}$, is the integer $a_j-a_{j+1}$. We say that the partition $\vec{\alpha}=\tup{a}{k}$ has:
\begin{enumerate}
 \item a \emph{cliff} at~$j$ if $d_j\apply{\vec{\alpha}}\geq 2$.
 \item a \emph{slippery plateau} at~$j$ if there exists $k>j$ such that $d_i\apply{\vec{\alpha}}=0$ for all $i \in \set{j,j+1,\ldots, k-1}$ and $d_k\apply{\vec{\alpha}}=1$. The integer $k-j$ is called the \emph{length} of the slippery plateau at~$j$.
 \item a \emph{non-slippery plateau} at~$j$ if $d_i\apply{\vec{\alpha}}=0$ for all $i \in \set{j,j+1,\ldots, k-1}$ and it has a cliff at~$k$. The integer $k-j$ is the \emph{length} of the non-slippery plateau at~$j$.
 \item a \emph{slippery step} at~$j$ if $\vec{\alpha}'=\apply{a_1,\ldots,a_j-1,\ldots,a_k}$ is a partition with a slippery plateau at~$j$.
 \item a \emph{non-slippery step} at~$j$ if $\vec{\alpha}'=\apply{a_1,\ldots,a_j-1,\ldots,a_k}$  is a partition with a non-slippery plateau at~$j$.
\end{enumerate}

\end{definition}

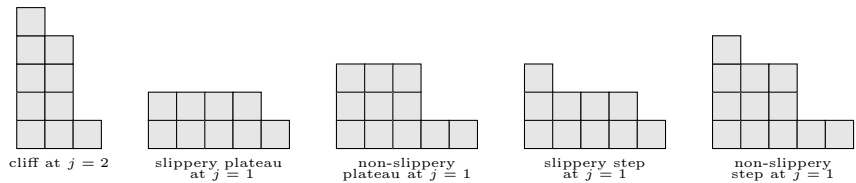
\begin{figure}[h]
\centering		
				\begin{tikzpicture}[x=.75cm, y=.75cm]
				\tikzstyle{normal}=[rectangle,draw,inner sep=0.1875cm,fill=gray!20]
				\node[normal] (1) at (-3, 0) {};
				\node[normal] (2) at (-3, 0.5) {};
				\node[normal] (3) at (-3, 1) {};
				\node[normal] (4) at (-3, 1.5) {};
				\node[normal] (5) at (-3, 2) {};
				\node[normal,label=below:\tiny{cliff at $j=2$}] (1) at (-2.5, 0) {};
				\node[normal] (6) at (-2.5, 0.5) {};
				\node[normal] (7) at (-2.5, 1) {};
				\node[normal] (8) at (-2.5, 1.5) {};
				\node[normal] (9) at (-2, 0) {};
				
				\begin{scope}[xshift=-0.5cm]
				
				\node[normal] (1) at (0, 0) {};
				\node[normal] (2) at (0, 0.5) {};
				\node[normal] (3) at (0.5, 0) {};
				\node[normal] (4) at (0.5, 0.5) {};
				\node[normal,label=below:\tiny{slippery plateau}] (1) at (1, 0) {};
				\node[label=below:\tiny{ at $j=1$}] (1) at (1, -0.3) {};
				\node[normal] (5) at (1, 0.5) {};
				\node[normal] (6) at (1.5, 0) {};
				\node[normal] (7) at (1.5, 0.5) {};
				\node[normal] (8) at (2, 0) {};
				
				\end{scope}
				\begin{scope}[xshift=2.cm]
				
				\node[normal] (1) at (0, 0) {};
				\node[normal] (2) at (0, 0.5) {};
				\node[normal] (3) at (0, 1) {};
				\node[normal] (4) at (0.5, 0) {};
				\node[normal] (5) at (0.5, 0.5) {};
				\node[normal] (6) at (0.5, 1) {};
				\node[normal,label=below:\tiny{non-slippery}] (1) at (1, 0) {};
				\node[label=below:\tiny{plateau at $j=1$}] (1) at (1, -0.3) {};
				\node[normal] (7) at (1, 0.5) {};
				\node[normal] (8) at (1, 1) {};
				\node[normal] (9) at (1.5, 0) {};
				\node[normal] (10) at (2, 0) {};
				
				\end{scope}
				
				\begin{scope}[xshift=4.5cm]
				
				\node[normal] (1) at (0, 0) {};
				\node[normal] (1) at (0, 0.5) {};
				\node[normal] (1) at (0, 1) {};
				\node[normal] (1) at (0.5, 0) {};
				\node[normal] (1) at (0.5, 0.5) {};
				\node[normal,label=below:\tiny{slippery step}] (1) at (1, 0) {};
				\node[label=below:\tiny{at $j=1$}] (1) at (1, -0.3) {};
				\node[normal] (1) at (1, 0.5) {};
				\node[normal] (1) at (1.5, 0) {};
				\node[normal] (1) at (1.5, 0.5) {};
				\node[normal] (1) at (2, 0) {};

				\end{scope}
				
				\begin{scope}[xshift=7cm]
				\node[normal] (1) at (0, 0) {};
				\node[normal] (1) at (0, 0.5) {};
				\node[normal] (1) at (0, 1) {};
				\node[normal] (1) at (0, 1.5) {};
				\node[normal] (1) at (0.5, 0) {};
				\node[normal] (1) at (0.5, 0.5) {};
				\node[normal] (1) at (0.5, 1) {};
				\node[normal,label=below:\tiny{non-slippery}] (1) at (1, 0) {};
				\node[label=below:\tiny{step at $j=1$}] (1) at (1, -0.3) {};
				\node[normal] (1) at (1, 0.5) {};
				\node[normal] (1) at (1, 1) {};
				\node[normal] (1) at (1.5, 0) {};
				\node[normal] (1) at (2, 0) {};
				\end{scope}

				\end{tikzpicture}
				
\caption{The slippery plateau and the slippery step have length~$3$, the non-slippery plateau and the non-slippery step have length~$2$.}
\end{figure}

The covering relation of Theorem~\ref{covering} is also described in~\cite{LATAPY} using a correspondence between the two cases of Theorem~\ref{covering} and the following \emph{two transition rules}.

\begin{enumerate}
 \item If $\vec{\alpha}$ has a cliff at~$j$, one grain can fall from column~$j$ to column $j+1$.

		\begin{center}	
			\begin{tikzpicture}[scale=0.6]
			\tikzstyle{normal}=[rectangle,draw,inner sep=0.1555cm,fill=gray!20]
			\node[normal,label=below:\tiny{$j$}] (1) at (0, 0) {};
			\node[normal] (1) at (0, 0.5) {};
			\node[normal] (2) at (0, 1) {};
			\node[normal,fill=gray!50] (3) at (0, 1.5) {};
			\node[normal] (4) at (0.5, 0) {};
			
			\draw [->,thick,dashed] (1.5,1)--(3,1);
			\draw [-] (0.25,1.5)--(0.5,1.5);
			\draw [->] (0.5,1.5)--(0.5,0.5);
			
			\node[normal,label=below:\tiny{$j$}] (5) at (4, 0) {};
			\node[normal] (6) at (4, 0.5) {};
			\node[normal] (7) at (4, 1) {};
			\node[normal] (8) at (4.5, 0) {};
			\node[normal,fill=gray!50] (9) at (4.5, 0.5) {};
			
			\end{tikzpicture}
			
		\end{center}
		
 \item If $\vec{\alpha}$ has a slippery step of length~$l$ at~$j$, one grain can slip from column~$j$ to column $k=j+l+1$.
        \begin{center}			
			\begin{tikzpicture}[scale=0.6]
			\tikzstyle{normal}=[rectangle,draw,inner sep=0.1555cm,fill=gray!20]
			\node[normal] (1) at (0, 0) {};
			\node[normal] (2) at (0, 0.5) {};
			\node[normal] (3) at (0, 1) {};
			\node[normal,label=below:\tiny{$j$}] (4) at (0.5, 0) {};
			\node[normal] (5) at (0.5, 0.5) {};
			\node[normal,fill=gray!50] (6) at (0.5, 1) {};
			\node[normal] (7) at (1, 0) {};
			\node[normal] (8) at (1, 0.5) {};
			\node[normal] (9) at (1.5, 0) {};
			\node[normal] (10) at (1.5, 0.5) {};
			\node[normal,label=below:\tiny{$k$}] (11) at (2, 0) {};

			\draw [->,thick,dashed] (2.75,1)--(4,1);
			\draw [-] (0.75,1)--(2,1);
			\draw [->] (2,1)--(2,0.5);
			\node[normal] (12) at (5, 0) {};
			\node[normal] (13) at (5, 0.5) {};
			\node[normal] (14) at (5, 1) {};
			\node[normal,label=below:\tiny{$j$}] (15) at (5.5, 0) {};
			\node[normal] (16) at (5.5, 0.5) {};
			\node[normal] (17) at (6, 0) {};
			\node[normal] (18) at (6, 0.5) {};
			\node[normal,label=below:\tiny{$k$}] (19) at (6.5, 0) {};
			\node[normal] (20) at (6.5, 0.5) {};
			\node[normal] (21) at (7, 0) {};
			\node[normal,fill=gray!50] (22) at (7, 0.5) {};
			\end{tikzpicture}
			
		\end{center}
\end{enumerate}

Given a partition $\vec{\alpha}\in \Lp{n}$, if one applies to $\vec{\alpha}$ the first or second transition rule, every time that~$\vec{\alpha}$ has a cliff or slippery step, then we obtain all partitions~$\vec{\beta}$ such that $\vec{\beta}\prec \vec{\alpha}$. If~$\vec{\beta}$ is obtained from~$\vec{\alpha}$ by applying one of the two transition rules, then~$\vec{\beta}$ is called \emph{directly reachable} from~$\vec{\alpha}$. This is denoted by $\vec{\alpha} \xrightarrow[]{j} \vec{\beta}$, where~$j$ is the column from which the grain falls or slips. The set ${\mathcal{D}(\vec{\alpha})=\{\vec{\beta}\in\Lp{n} \mid \exists j\colon \vec{\alpha}  \xrightarrow[]{j} \vec{\beta} \}=\set{\vec{\beta}\in\Lp{n} \mid \vec{\beta} \prec \vec{\alpha}}}$ denotes the set of \textit{all partitions directly reachable} from $\vec{\alpha}$.

	\begin{example}
 Consider the partition $\vec{\alpha}=\apply{5,3,2,1,1}$ in~$\Lp{12}$.
 It has a cliff at $j=1$, and also has a slippery step at $j=2$ and at $j=3$. We obtain $\vec{\beta}_1$, $\vec{\beta}_2$ and $\vec{\beta}_3$ after applying the corresponding transition rules $\vec{\alpha} \xrightarrow[]{1} \vec{\beta}_1$, $\vec{\alpha} \xrightarrow[]{2} \vec{\beta}_2$, $\vec{\alpha} \xrightarrow[]{3} \vec{\beta}_3$, the Ferrers diagrams of which are:
		\begin{center}
			\begin{tikzpicture}[x=1cm, y=1cm,scale=0.5]
			\tikzstyle{normal}=[rectangle,draw,inner sep=0.125cm,fill=gray!20]
			
			\node[normal] (1) at (0, 0) {};
			\node[normal] (2) at (0, 0.5) {};
			\node[normal] (3) at (0, 1) {};
			\node[normal] (4) at (0, 1.5) {};
			\node[normal] (5) at (0, 2) {};
			\node[normal] (6) at (0.5, 0) {};
			\node[normal] (7) at (0.5, 0.5) {};
			\node[normal] (8) at (0.5, 1) {};
			\node[normal,label=below:{$\vec{\alpha}$}] (9) at (1, 0) {};
			\node[normal] (10) at (1, 0.5) {};
			\node[normal] (11) at (1.5, 0) {};
			\node[normal] (12) at (2, 0) {};
			
			\draw [->,thick,dashed] (2.5,1)--(5,1);
			
			\begin{scope}[xshift=7cm]
			\node[normal] (1) at (0, 0) {};
			\node[normal] (1) at (0, 0.5) {};
			\node[normal] (1) at (0, 1) {};
			\node[normal] (1) at (0, 1.5) {};
			\node[normal] (1) at (0.5, 0) {};
			\node[normal] (1) at (0.5, 0.5) {};
			\node[normal] (1) at (0.5, 1) {};
			\node[normal,fill=gray!50] (1) at (0.5, 1.5) {};
			\node[normal,label=below:{$\vec{\beta}_1$}] (1) at (1, 0) {};
			\node[normal] (1) at (1, 0.5) {};
			\node[normal] (1) at (1.5, 0) {};
			\node[normal] (1) at (2, 0) {};
			
			\node[normal] (1) at (3, 0) {};
			\node[normal] (1) at (3, 0.5) {};
			\node[normal] (1) at (3, 1) {};
			\node[normal] (1) at (3, 1.5) {};
			\node[normal] (1) at (3, 2) {};
			\node[normal] (1) at (3.5, 0) {};
			\node[normal] (1) at (3.5, 0.5) {};
			\node[normal,label=below:{$\vec{\beta}_2$}] (1) at (4, 0) {};
			\node[normal] (1) at (4, 0.5) {};
			\node[normal] (1) at (4.5, 0) {};
			\node[normal,fill=gray!50] (1) at (4.5, 0.5) {};
			\node[normal] (1) at (5, 0) {};

			\node[normal] (1) at (6, 0) {};
			\node[normal] (1) at (6, 0.5) {};
			\node[normal] (1) at (6, 1) {};
			\node[normal] (1) at (6, 1.5) {};
			\node[normal] (1) at (6, 2) {};
			\node[normal] (1) at (6.5, 0) {};
			\node[normal] (1) at (6.5, 0.5) {};
			\node[normal] (1) at (6.5, 1) {};
			\node[normal,label=below:{$\vec{\beta}_3$}] (1) at (7, 0) {};
			\node[normal] (1) at (7.5, 0) {};
			\node[normal] (1) at (8, 0) {};
			\node[normal,fill=gray!50] (1) at (8.5, 0) {};
			\end{scope}
			
			\end{tikzpicture}	
		\end{center}
 Therefore, $\mathcal{D}\apply{\vec{\alpha}}=\set{(4,4,2,1,1),(5,2,2,2,1),(5,3,1,1,1,1)}$.
	\end{example}
	
We would like to study the standard context of the lattice~$\Lp{n}$. To this end, we quickly recall the notion of context and standard context as known from formal concept analysis~\cite{ganter}.

\subsection{Notions of formal concept analysis}

Formal concept analysis (FCA) is a method for data analysis based on the notion of a \emph{formal concept}. FCA was born around 1980, when a research group in Darmstadt, Germany, headed by Rudolf Wille, began to develop a framework for lattice theory applications, \cite{ganter1997applied}.

For sets~$G$ and~$M$ and any binary relation $I\subseteq G\times M$ between~$G$ and~$M$ the triple $\apply{G,M,I}$ is called a \emph{formal context}, and~$I$ its \emph{incidence relation}. The elements of~$G$ are called \emph{objects} and those of~$M$ are \emph{attributes}. When~$G$ and~$M$ are finite, a  formal context can be represented by a cross table. The elements on the left-hand side (row labels) are the objects, the elements at the top (column labels) are the attributes, and the incidence relation is represented by the crosses.

A central notion of FCA are the following two derivation operators
\begin{align*}
A'&:=\set{m\in M\mid \forall g\in A\colon (g,m)\in I}&&\text{for }A\subseteq G,\\
B'&:=\set{g\in G \mid \forall m\in B\colon (g,m)\in I}&&\text{for }B\subseteq M.
\end{align*}
So~$A'$ is the set of attributes common to all the objects in~$A$, and~$B'$ is the set of objects possessing the attributes in~$B$. The pair $({'},{'})$ of mappings establishes a Galois connection between the power set lattices on~$G$ and~$M$, induced by~$I$.

Let $A\subseteq G$ and $B\subseteq M$. A pair $\apply{A,B}$ is a \emph{formal concept} of $\apply{G,M,I}$ if and only if $A'=B$ and $B'=A$.~$A$ is called the \emph{extent} and~$B$ the \emph{intent} of the concept $\apply{A,B}$. The concepts of a given context can be ordered as follows:
\begin{align*}
\apply{A_1,B_1}\leq \apply{A_2,B_2}& \:\mathrel{\mathop:}\Longleftrightarrow\: A_1\subseteq A_2&\text{(or equivalently}&\iff B_2\subseteq B_1\text{)}.
\end{align*}

The ordered set of all formal concepts of $\apply{G,M,I}$ is denoted by $\underline{\mathfrak{B}}\apply{G,M,I}$ and forms a complete lattice, called the \emph{concept lattice} of $\apply{G,M,I}$. One of the fundamental theorems of formal concept analysis is the statement, shown in the monograph~\cite{ganter} by Bernhard Ganter and Rudolf Wille, that the concept lattices are, up to isomorphism, exactly the complete lattices. Every concept lattice is complete, and every complete lattice is isomorphic to some concept lattice. This applies in particular to finite lattices~$\mathbb{L}$, such as~$\Lp{n}$ for $n\in\Nplus$, which are always complete and are usually described by their \emph{standard context} $\apply{\mathcal{J}\apply{\mathbb{L}}, \mathcal{M}\apply{\mathbb{L}},\leq}$. One easily verifies that every concept lattice of a finite context and every finite lattice is \emph{doubly founded}, see~\cite{ganter} for the definition. Moreover, in~\cite{ganter} it is proved that if~$L$ is a doubly founded complete lattice, then~$\mathbb{L}$ is isomorphic to $\underline{\mathfrak{B}}\apply{\mathcal{J}\apply{\mathbb{L}},\mathcal{M}\apply{\mathbb{L}},\leq}$.

Studying the lattice~$\Lp{n}$ of positive integer partitions through its standard context immediately leads to the problem of describing the join-irreducible elements of~$\Lp{n}$. Again due to finiteness, this poses the question which $\vec{\alpha}\in\Lp{n}$ have the property that the conditions in Theorem~\ref{covering} are fulfilled by exactly one other partition~$\vec{\beta}$, in other words, where $\mathcal{D}(\vec{\alpha})$ is a singleton. As it turns out, it is not obvious how to describe and count the set of such partitions~$\vec{\alpha}$ directly. Instead we shall try to understand how to derive $\mathcal{J}\apply{\Lp{n+1}}$ from $\mathcal{J}\apply{\Lp{n}}$ for $n\in\Nplus$.

\section{Relation between the lattices~$\Lp{n}$ and $\Lp{n+1}$}\label{sect:Ln-Ln+1}

From a partition $\vec{\alpha}=\tup{a}{n}$ of $n\in \Nplus$ we obtain a new partition of $n+1$, if we add one grain to its first column, i.e.,
$\vec{\alpha}^{\downarrow_1} := \apply{a_1+1, a_2,\ldots a_n,0}$.  For $i\in \set{1,\ldots,n}$ we denote by
$\vec{\alpha}^{\downarrow_i}$ the tuple $\apply{a_1,a_2,\ldots,a_i+1,\ldots,a_n,0}$, which is not always a partition of $n+1$, and by $\vec{\alpha}^{\downarrow_{n+1}}$ the tuple $\apply{a_1,a_2,\ldots,a_n,1}$. If $A\subseteq \Lp{n}$, then
$A^{\downarrow_i}=\set{\vec{\alpha}^{\downarrow_i} \mid \vec{\alpha}\in A}$. In Figure~\ref{Lattices_L6_and_L7} the lattice diagrams of~$\Lp{6}$ and~$\Lp{7}$ are shown, the elements of $\Lp{6}^{\downarrow_1}$ appear shaded in~$\Lp{7}$.
\begin{figure}[h]
\centering
			\tikzstyle{normal}=[circle,draw,inner sep=.065cm]
			\begin{tikzpicture}[scale=0.65]
			\node[normal,label=right:{(6)},fill=gray] (16) at (-7, -1) {};
			\node[normal,label=right:{(5,1)},fill=gray] (17) at (-7, -2) {};
			\node[normal,label=right:{(4,2)},fill=gray] (18) at (-7, -3) {};
			\node[normal,label=left:{(4,1,1)},fill=gray] (19) at (-8, -4) {};
			\node[normal,label=right:{(3,3)},fill=gray] (20) at (-6, -4) {};
			\node[normal,label=right:{(3,2,1)},fill=gray] (21) at (-7, -5) {};
			\node[normal,label=left:{(3,1,1,1)},fill=gray] (22) at (-8, -6) {};
			\node[normal,label=right:{(2,2,2)},fill=gray] (23) at (-6, -6) {};
			\node[normal,label=right:{(2,2,1,1)},fill=gray] (24) at (-7, -7) {};
			\node[normal,label=right:{(2,1,1,1,1)},fill=gray] (25) at (-7, -8) {};
			\node[normal,label=right:{(1,1,1,1,1)},fill=gray] (26) at (-7, -9) {};

			\node[normal,label=right:{(7)},fill=gray] (1) at (0, 0) {};
			\node[normal,label=right:{(6,1)},fill=gray] (2) at (0, -1) {};
			\node[normal,label=right:{(5,2)},fill=gray] (3) at (0, -2) {};
			\node[normal,label=left:{(5,1,1)},fill=gray] (4) at (-1, -3) {};
			\node[normal,label=right:{(4,3)},fill=gray] (5) at (1, -3) {};
			\node[normal,label=right:{(4,2,1)},fill=gray] (6) at (0, -4) {};
			\node[normal,label=right:{(3,3,1)}] (7) at (1, -5) {};
			\node[normal,label=right:{(3,2,2)},fill=gray] (8) at (1, -6) {};
			\node[normal,label=left:{(4,1,1,1)},fill=gray] (9) at (-1, -5.5) {};
			\node[normal,label=right:{(3,2,1,1)},fill=gray] (10) at (0, -7) {};
			\node[normal,label=left:{(3,1,1,1,1)},fill=gray] (11) at (-1, -8) {};
			\node[normal,label=right:{(2,2,2,1)}] (12) at (1, -8) {};
			\node[normal,label=right:{(2,2,1,1,1)}] (13) at (0, -9) {};
			\node[normal,label=right:{(2,1,1,1,1,1)},fill=gray] (14) at (0, -10) {};
			\node[normal,label=right:{(1,1,1,1,1,1,1)}] (15) at (0, -11) {};
			\draw (1) -- (2);
			\draw (2) -- (3);
			\draw (3) -- (4);
			\draw (3) -- (5);
			\draw (4) -- (6);
			\draw (5) -- (6);
			\draw (6) -- (7);
			\draw (7) -- (8);
			\draw (6) -- (9);
			\draw (8) -- (10);
			\draw (9) -- (10);
			\draw (10) -- (11);
			\draw (10) -- (12);
			\draw (11) -- (13);
			\draw (12) -- (13);
			\draw (13) -- (14);
			\draw (14) -- (15);
			
			\draw (16) -- (17);
			\draw (17) -- (18);
			\draw (18) -- (19);
			\draw (18) -- (20);
			\draw (19) -- (21);
			\draw (20) -- (21);
			\draw (21) -- (22);
			\draw (21) -- (23);
			\draw (22) -- (24);
			\draw (23) -- (24);
			\draw (24) -- (25);
			\draw (25) -- (26);
			
			\end{tikzpicture}
\caption{Lattices $\Lp{6}$ and $\Lp{7}$}
\label{Lattices_L6_and_L7}
\end{figure}

Subsequently we prove that there is an isomorphic copy of~$\Lp{n}$ in~$\Lp{n+1}$.

\begin{lemma}\label{Lemma_isomorphic}
$\Lp{n}$ is isomorphic to $\Lp{n}^{\downarrow_1}$ for all $n\geq 1$.
\end{lemma}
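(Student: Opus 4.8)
The plan is to verify that the map $\varphi\colon\Lp{n}\to\Lp{n}^{\downarrow_1}$ defined by $\varphi(\vec{\alpha}):=\vec{\alpha}^{\downarrow_1}$ is an order-isomorphism in the sense of the preliminaries, where $\Lp{n}^{\downarrow_1}$ is regarded with the order induced from~$\Lp{n+1}$. Since the order-isomorphic image of a lattice is again a lattice, this simultaneously shows that $\Lp{n}^{\downarrow_1}$ is a lattice and that it is isomorphic to~$\Lp{n}$, which is exactly what the statement claims; in particular no separate verification of lattice axioms for $\Lp{n}^{\downarrow_1}$ is needed.

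First I would check that $\varphi$ is a well-defined bijection. For $\vec{\alpha}=\tup{a}{n}\in\Part(n)$ one has $a_1+1\geq a_2\geq\cdots\geq a_n\geq 0$ and $(a_1+1)+a_2+\cdots+a_n+0=n+1$, so $\vec{\alpha}^{\downarrow_1}\in\Part(n+1)$; hence $\varphi$ does map into $\Lp{n}^{\downarrow_1}$, and it is surjective onto that set by its very definition. Injectivity is immediate, since $\vec{\alpha}$ is recovered from $\vec{\alpha}^{\downarrow_1}$ by subtracting~$1$ from the first coordinate (and deleting the trailing~$0$).

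The core of the argument is the equivalence $\vec{\alpha}\leq\vec{\beta}\iff\vec{\alpha}^{\downarrow_1}\leq\vec{\beta}^{\downarrow_1}$ for all $\vec{\alpha},\vec{\beta}\in\Lp{n}$. Here I would pad all tuples with trailing zeros so that they have common length~$n+1$, and then observe that the $j$-th partial sum of $\vec{\alpha}^{\downarrow_1}$ equals $1+\sum_{i=1}^{j}a_i$ for every $j\geq 1$, and likewise for $\vec{\beta}^{\downarrow_1}$. Consequently $\sum_{i=1}^{j}(\vec{\alpha}^{\downarrow_1})_i\leq\sum_{i=1}^{j}(\vec{\beta}^{\downarrow_1})_i$ holds for all $j$ if and only if $\sum_{i=1}^{j}a_i\leq\sum_{i=1}^{j}b_i$ holds for all $j$, the constant~$1$ cancelling on both sides. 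By the definition of the dominance order this is precisely $\vec{\alpha}\leq\vec{\beta}\iff\vec{\alpha}^{\downarrow_1}\leq\vec{\beta}^{\downarrow_1}$, so the bijection $\varphi$ satisfies the defining condition of an order-isomorphism, and the proof concludes.

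I do not anticipate a genuine obstacle: the only point requiring a little care is the bookkeeping with tuple lengths, i.e.\ comparing the partial sums of both sides over the same index range up to~$n+1$ so that the cancellation of the additive shift by~$1$ is legitimate. As an optional remark, one can additionally note that $\Lp{n}^{\downarrow_1}$ is in fact a sublattice of~$\Lp{n+1}$, since in the dominance order meets are obtained by taking the coordinatewise minimum of partial sums and joins by taking the least concave majorant of the partial sums, and both operations are unaffected by the uniform shift induced by~$\downarrow_1$; but this stronger fact is not needed for the statement as phrased.
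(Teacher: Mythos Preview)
Your proposal is correct and follows essentially the same approach as the paper: both define $\varphi(\vec{\alpha})=\vec{\alpha}^{\downarrow_1}$, note bijectivity, and verify the order equivalence by observing that every partial sum of $\vec{\alpha}^{\downarrow_1}$ exceeds the corresponding partial sum of $\vec{\alpha}$ by exactly~$1$, so the dominance inequalities are preserved in both directions. Your write-up is slightly more explicit about well-definedness and the index bookkeeping, and your closing remark about the sublattice property anticipates the paper's subsequent Proposition (quoted from~\cite{LATAPY}), but none of this constitutes a different method.
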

\begin{proof}
Let ${\varphi\colon\Lp{n}\longrightarrow \Lp{n}^{\downarrow_1}}$ be given by ${\varphi(\vec{\alpha})=\vec{\alpha}^{\downarrow_1}}$. Clearly~$\varphi$ is injective. Since $\varphi(\Lp{n})=\Lp{n}^{\downarrow_1}$, we get that~$\varphi$ is bijective. To show that~$\varphi$ is an order-isomorphism, let $\vec{\alpha}=(a_1,\ldots,a_n)$ and $\vec{\beta}=(b_1,\ldots,b_n)$ be in~$\Lp{n}$. Then $\vec{\alpha}^{\downarrow_1}=(a'_1,\ldots,a'_{n+1})$, $\vec{\beta}^{\downarrow_1}=(b'_1,\ldots,b'_{n+1})$, where $a'_1=a_1+1$, $b'_1=b_1+1$, $a'_{n+1}=b'_{n+1}=0$ and $a'_i=a_i$, $b'_i=b_i$ for all $i\in\set{2\ldots,n}$, and it follows that
		\begin{align*}
		\vec{\alpha} \leq \vec{\beta} &\iff \sum_{i=1}^{j} a_i \leq \sum_{i=1}^{j} b_i \text{ for all } j\in \set{1,\ldots,n}\\
		&\iff 1+\sum_{i=1}^{j} a_i \leq 1+\sum_{i=1}^{j} b_i \text{ for all } j\in \set{1,\ldots,n} \\
		&\iff \sum_{i=1}^{j} a'_i \leq \sum_{i=1}^{j} b'_i \text{ for all } j\in \set{1,\ldots,n+1} \iff \vec{\alpha}^{\downarrow_1} \leq \vec{\beta}^{\downarrow_1}.\mbox{\qed}
		\end{align*}
\end{proof}

\begin{proposition}[\cite{LATAPY}\label{Prop_Latapy}]
The set $\Lp{n}^{\downarrow_1}$ forms a sublattice of $\Lp{n+1}$ for all $n\geq 1$.
\end{proposition}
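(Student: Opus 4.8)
The plan is to show that $\Lp{n}^{\downarrow_1}$ is closed under the join and meet operations of $\Lp{n+1}$. By Lemma~\ref{Lemma_isomorphic} the map $\varphi\colon\vec{\alpha}\mapsto\vec{\alpha}^{\downarrow_1}$ is an order-isomorphism from $\Lp{n}$ onto $\Lp{n}^{\downarrow_1}$, so it suffices to prove that for $\vec{\alpha},\vec{\beta}\in\Lp{n}$ the join $\vec{\alpha}^{\downarrow_1}\vee\vec{\beta}^{\downarrow_1}$ computed in $\Lp{n+1}$ equals $(\vec{\alpha}\vee\vec{\beta})^{\downarrow_1}$, and dually for meets; then $\Lp{n}^{\downarrow_1}$ is a sublattice and in fact $\varphi$ is a lattice isomorphism onto it. I would focus on the join, since the meet follows by the same argument or by appealing to the self-duality of $\Lp{n}$ under conjugation.

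First I would recall (from Brylawski~\cite{Brylawski}) the explicit description of joins in the dominance lattice in terms of partial sums. Writing $\sigma_j(\vec{\gamma})=\sum_{i=1}^{j}c_i$ for a partition $\vec{\gamma}$, one has $\sigma_j(\vec{\alpha}\vee\vec{\beta})$ equal to the largest value not exceeding $\min\{\,?\,\}$... more precisely, the partial-sum sequence of $\vec{\alpha}\vee\vec{\beta}$ is obtained from $j\mapsto\min\{\sigma_j(\vec{\alpha}),\sigma_j(\vec{\beta})\}$ by replacing it with the least concave sequence lying above it (equivalently, the pointwise minimum of all concave integer sequences dominating both). The key observation is then purely arithmetic: adding one grain to the first column shifts every partial sum $\sigma_j$ (for $j\geq 1$) up by exactly $1$, it does not change which sequences are concave, and it does not change the length-$(n+1)$ versus length-$n$ bookkeeping in an essential way because the appended final coordinate is $0$. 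Hence the ``concavification'' of $j\mapsto\min\{\sigma_j(\vec{\alpha}^{\downarrow_1}),\sigma_j(\vec{\beta}^{\downarrow_1})\}=1+\min\{\sigma_j(\vec{\alpha}),\sigma_j(\vec{\beta})\}$ is just $1+$ the concavification of $j\mapsto\min\{\sigma_j(\vec{\alpha}),\sigma_j(\vec{\beta})\}$, which says exactly $\vec{\alpha}^{\downarrow_1}\vee\vec{\beta}^{\downarrow_1}=(\vec{\alpha}\vee\vec{\beta})^{\downarrow_1}$.

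An alternative, more self-contained route avoids quoting the join formula: show directly that $(\vec{\alpha}\vee\vec{\beta})^{\downarrow_1}$ is an upper bound of $\set{\vec{\alpha}^{\downarrow_1},\vec{\beta}^{\downarrow_1}}$ in $\Lp{n+1}$ (immediate from $\varphi$ being order-preserving) and that it is the \emph{least} such: if $\vec{\gamma}\in\Lp{n+1}$ satisfies $\vec{\gamma}\geq\vec{\alpha}^{\downarrow_1}$ and $\vec{\gamma}\geq\vec{\beta}^{\downarrow_1}$, then $c_1\geq a_1+1\geq 2$, so $\vec{\gamma}=\vec{\delta}^{\downarrow_1}$ for the partition $\vec{\delta}=(c_1-1,c_2,\ldots,c_n)$ of $n$ (note $c_{n+1}=0$ is forced since $\sigma_{n+1}(\vec{\gamma})=n+1=\sigma_n(\vec{\gamma})$... wait, rather $c_{n+1}=n+1-\sigma_n(\vec{\gamma})$, and one checks $\sigma_n(\vec{\gamma})\geq\sigma_n(\vec{\alpha}^{\downarrow_1})=n+1$, hence $c_{n+1}=0$); applying $\varphi^{-1}$ gives $\vec{\delta}\geq\vec{\alpha},\vec{\beta}$, so $\vec{\delta}\geq\vec{\alpha}\vee\vec{\beta}$ in $\Lp{n}$, and applying $\varphi$ back yields $\vec{\gamma}\geq(\vec{\alpha}\vee\vec{\beta})^{\downarrow_1}$. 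The meet is dual. I expect the main obstacle to be the bookkeeping around the appended zero coordinate and the shift in tuple length from $n$ to $n+1$: one must be careful that an upper bound $\vec{\gamma}$ of $\vec{\alpha}^{\downarrow_1}$ and $\vec{\beta}^{\downarrow_1}$ genuinely lies in the image of $\varphi$, i.e.\ that its last coordinate vanishes and its first coordinate is at least $2$, so that $\varphi^{-1}(\vec{\gamma})$ is a bona fide partition of $n$ — this is exactly where the hypothesis that we added the grain to the \emph{first} column (rather than an interior one) is used.
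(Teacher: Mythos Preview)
The paper does not give its own proof of this proposition; it is quoted from Latapy--Phan~\cite{LATAPY}. Judged on its own merits, your first route via Brylawski's description of the join is essentially correct, modulo a slip: for the supremum you want the least concave sequence above the pointwise \emph{maximum} of $\sigma_j(\vec{\alpha})$ and $\sigma_j(\vec{\beta})$, not the minimum. Once that is fixed, the observation that the affine shift $\sigma_j\mapsto \sigma_j+1$ commutes with forming least concave majorants does yield $\vec{\alpha}^{\downarrow_1}\vee\vec{\beta}^{\downarrow_1}=(\vec{\alpha}\vee\vec{\beta})^{\downarrow_1}$.

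Your second, ``self-contained'' route, however, has a genuine gap. You assert that every upper bound $\vec{\gamma}=(c_1,\dotsc,c_{n+1})$ of $\vec{\alpha}^{\downarrow_1}$ and $\vec{\beta}^{\downarrow_1}$ lies in the image of~$\varphi$, checking only $c_1\geq 2$ and $c_{n+1}=0$. But the image of~$\varphi$ is precisely $\set{\vec{\gamma}\in\Lp{n+1}\mid c_1>c_2}$, and the conditions you verify do not force $c_1>c_2$. Concretely, for $n=5$ and $\vec{\alpha}=\vec{\beta}=(2,2,1)$ one has $\vec{\alpha}^{\downarrow_1}=(3,2,1)\in\Lp{6}$, and $\vec{\gamma}=(3,3)$ is an upper bound with $c_1=c_2=3$; then $(c_1-1,c_2)=(2,3)$ is not a partition. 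Thus the ``bookkeeping'' obstacle you flag at the end is a real obstruction, and you have identified the wrong condition to test. A clean repair: handle the meet directly via partial sums (this works because $\sigma_j(\vec{\alpha}\wedge\vec{\beta})=\min\bigl(\sigma_j(\vec{\alpha}),\sigma_j(\vec{\beta})\bigr)$, so from $\sigma_j(\vec{\gamma})\leq 1+\sigma_j(\vec{\alpha})$ and $\sigma_j(\vec{\gamma})\leq 1+\sigma_j(\vec{\beta})$ one gets $\sigma_j(\vec{\gamma})\leq 1+\sigma_j(\vec{\alpha}\wedge\vec{\beta})$ without ever pulling $\vec{\gamma}$ back through~$\varphi$), and then deduce closure under joins either from your first route or via the conjugation antiautomorphism of~\cite{Brylawski}.
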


We write ${\mathbb{K}} \rightarrowtail {\mathbb{L}}$ to indicate that the lattice~${\mathbb{L}}$ has a sublattice isomorphic to the lattice~${\mathbb{K}}$. This embedding relation is transitive, that is, if ${\mathbb{K}} \rightarrowtail {\mathbb{L}} \rightarrowtail {\mathbb{M}}$, then ${\mathbb{K}}\rightarrowtail {\mathbb{M}}$. Moreover, from Lemma~\ref{Lemma_isomorphic} and Proposition~\ref{Prop_Latapy} we obtain $\Lp{n} \rightarrowtail \Lp{n+1}$ for all $n\in\Nplus$.

\begin{lemma}
If $n\geq 7$, then~$\Lp{n}$ is non-modular and non-distributive.
\end{lemma}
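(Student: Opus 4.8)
The plan is to exhibit, for every $n\geq 7$, a sublattice of $\Lp{n}$ isomorphic to $\NNNNN$ (the pentagon) and one isomorphic to $\MMM$ (the diamond). By the classical characterisation theorems of Dedekind and Birkhoff, a lattice is modular if and only if it has no sublattice isomorphic to $\NNNNN$, and it is distributive if and only if it has no sublattice isomorphic to $\NNNNN$ or $\MMM$. Hence producing an $\NNNNN$-sublattice already yields both non-modularity and non-distributivity in one stroke, so strictly speaking only the pentagon is needed; I would nonetheless mention $\MMM$ if a clean copy is available, but the economical route is to focus on $\NNNNN$.

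First I would settle the base case $n=7$ by pointing at the Hasse diagram of $\Lp{7}$ already drawn in Figure~\ref{Lattices_L6_and_L7}. Concretely, I would look for five partitions of $7$ forming a pentagon: a bottom $0_\mathbb{L}$, a top $1_\mathbb{L}$, a single element $v$ on the long side with $0_\mathbb{L}\prec v\prec 1_\mathbb{L}$, and a two-element chain $0_\mathbb{L}\prec w\prec u\prec 1_\mathbb{L}$ on the short side, with $u\vee v=1_\mathbb{L}$ and $w\wedge v=0_\mathbb{L}$ and no other comparabilities. Reading off the diagram, the five partitions $(4,2,1)$, $(3,3,1)$, $(3,2,2)$, $(4,1,1,1)$, $(3,2,1,1)$ look like a natural candidate: $(4,2,1)$ covers $(3,3,1)$ and $(4,1,1,1)$, the chain $(3,3,1)\succ(3,2,2)\succ(3,2,1,1)$ sits on one side, $(4,1,1,1)$ sits alone on the other side, and $(3,2,1,1)$ is their common lower bound. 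I would verify the required joins and meets directly from the dominance order (a handful of partial-sum comparisons) and check that the induced order on these five elements is exactly $\NNNNN$; this is a short finite check, not a calculation to grind through here.

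For the inductive step, I would invoke the embedding $\Lp{n}\rightarrowtail\Lp{n+1}$ established just before the statement (Lemma~\ref{Lemma_isomorphic} together with Proposition~\ref{Prop_Latapy}, via the map $\vec{\alpha}\mapsto\vec{\alpha}^{\downarrow_1}$) and its transitivity. Since $\NNNNN\rightarrowtail\Lp{7}$ and $\Lp{7}\rightarrowtail\Lp{8}\rightarrowtail\cdots\rightarrowtail\Lp{n}$ for every $n\geq 7$, transitivity of $\rightarrowtail$ gives $\NNNNN\rightarrowtail\Lp{n}$. A lattice containing a sublattice isomorphic to $\NNNNN$ is neither modular nor distributive (modularity and distributivity are inherited by sublattices, and $\NNNNN$ itself fails both), so $\Lp{n}$ is non-modular and non-distributive for all $n\geq 7$, as claimed. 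If a $\MMM$-copy in $\Lp{7}$ is also wanted, the three incomparable partitions $(3,3,1)$, $(4,1,1,1)$ and a third coatom-type element between $(4,2,1)$ and $(3,2,1,1)$ would be the place to look, but this is optional.

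The main obstacle is the base case: one must actually locate five partitions of $7$ whose induced sub-poset is the pentagon \emph{and} which is closed under the lattice operations of $\Lp{7}$ (it is not enough that the sub-poset looks like $\NNNNN$ in the Hasse diagram — one needs $u\vee v$ and $w\wedge v$ computed in the full lattice to land back in the five-element set). Verifying closure amounts to checking that, e.g., the join of the solitary side element with the bottom of the short-side chain equals the global top, and similarly for the meet, using the partial-sum definition of dominance; these are quick but must be done carefully, since a wrong choice of five elements can fail closure even when the order diagram is correct. Everything after the base case is formal, relying only on transitivity of the embedding relation and the standard fact that non-modularity/non-distributivity witnessed by an $\NNNNN$-sublattice is preserved upwards along $\rightarrowtail$.
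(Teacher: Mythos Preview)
Your proposal is correct and follows essentially the same approach as the paper: identify the pentagon $\set{(4,2,1),(4,1,1,1),(3,3,1),(3,2,2),(3,2,1,1)}$ as a sublattice of $\Lp{7}$, then propagate via the embedding chain $\Lp{7}\rightarrowtail\Lp{8}\rightarrowtail\cdots\rightarrowtail\Lp{n}$ from Lemma~\ref{Lemma_isomorphic} and Proposition~\ref{Prop_Latapy}, and conclude by the $\MMM$--$\NNNNN$ theorem. Your remark that one must check closure under the ambient lattice operations (not merely the shape of the sub-poset) is a valid methodological caution that the paper leaves implicit.
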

\begin{proof}
In Figure~\ref{Lattices_L6_and_L7} the set  $\set{(4,2,1),(4,1,1,1),(3,3,1),(3,2,2),(3,2,1,1)} \subseteq \Lp{7}$ is a sublattice isomorphic to $\NNNNN$. Consequently, for $n\geq 7$ we have by Lemma~\ref{Lemma_isomorphic} and Proposition~\ref{Prop_Latapy} that $\NNNNN \rightarrowtail \Lp{7}\rightarrowtail \Lp{8}\rightarrowtail \ldots\rightarrowtail \Lp{n-1}\rightarrowtail \Lp{n}$. By transitivity it follows that $\NNNNN\rightarrowtail\Lp{n}$. Hence, applying the $\MMM$-$\NNNNN$-Theorem in~\cite[p.~89]{davey} to~$\Lp{n}$, we obtain that it is non-modular and non-distributive.\qed
\end{proof}

If $1\leq n\leq 5$, the lattices~$\Lp{n}$ form a chain under the dominance ordering.  
From this and Figure~\ref{Lattices_L6_and_L7} we see that for $n\in\set{1,\dotsc,6}$, the lattice~$\Lp{n}$ does
not have sublattices isomorphic to~$\NNNNN$ or~$\MMM$, hence it is distributive and thus modular.

In what follows, the process to obtain the lattice~$\Lp{n+1}$ from~$\Lp{n}$ described in~\cite{LATAPY} is shown. For their construction the authors of~\cite{LATAPY} analyzed the directly reachable partitions from a partition $\vec{\alpha} \in \Lp{n}$, considering the following sets.\par

Let $n\in \Nplus$. The \emph{set of all partitions} of~$n$ with a \emph{cliff at~$1$} is denoted by $\Cl(n)$, with a \emph{slippery step at~$1$} by $\Ss(n)$, with a \emph{non-slippery step at~$1$} by $\NS(n)$, with a \emph{slippery plateau of length~$l\geq 1$ at~$1$} by $\PP_l(n)$, and with a \emph{non-slippery plateau at~$1$} by $\NP(n)$.

In order to generate the elements of $\Lp{n+1}$ from~$\Lp{n}$, we follow the two steps given in~\cite{LATAPY}. For each $\vec{\alpha} \in \Lp{n}$ we perform:
\begin{description}
\item[Step 1] Add one grain to the first column, that is, construct $\vec{\alpha}^{\downarrow 1}$.
\item[Step 2] If $\vec{\alpha} \in \Ss(n)$ or $\vec{\alpha}\in \NS(n)$, then construct $\vec{\alpha}^{\downarrow 2}$. If $\vec{\alpha}\in \PP_l(n)$ for some $l\geq 1$, then construct $\vec{\alpha}^{\downarrow l+2}$.
\end{description}

From these two steps every partition $\vec{\alpha}\in \Lp{n}$ generates at most two elements of $\Lp{n+1}$, called the \emph{sons} of~$\vec{\alpha}$. The element described in the first step, $\vec{\alpha}^{\downarrow 1}$ is called the \emph{left son} and the one in the second step is the \emph{right son} (if it exists). If~$\vec{\beta}$ is a son of~$\vec{\alpha}$, we call~$\vec{\alpha}$ \emph{the father} of~$\vec{\beta}$.

By the following theorem from~\cite{LATAPY}, every element in~$\Lp{n+1}$ has a father in~$\Lp{n}$.

\begin{theorem}[Latapy and Phan~\cite{LATAPY}]\label{recur}
For all $n\geq 1$, we have:
		\[
		\Lp{n+1}=\Lp{n}^{\downarrow_1} \sqcup \Ss(n)^{\downarrow_2} \sqcup \NS(n)^{\downarrow_2}\sqcup \bigsqcup_{1\leq l<n} \PP_l(n)^{\downarrow_{l+2}}.
		\]
\end{theorem}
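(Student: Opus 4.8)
The plan is to organise everything around one statistic of a partition $\vec\gamma=\apply{c_1,\dots,c_{n+1}}\in\Lp{n+1}$: the multiplicity $m=m\apply{\vec\gamma}$ of its largest part, i.e.\ the unique $m\ge 1$ with $c_1=\dots=c_m>c_{m+1}$ (with the convention $c_{m+1}:=0$ if the top row exhausts the nonzero parts). I would show that the three groups of summands on the right-hand side are exactly the partitions with $m=1$, with $m=2$, and with $m\ge 3$, respectively. Since $m$ is determined by $\vec\gamma$ alone, this simultaneously gives disjointness of the union and the fact that it covers all of $\Lp{n+1}$, reducing the theorem to the question of which father each $\vec\gamma$ has.

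A first block of routine work normalises the sets $\Ss(n),\NS(n),\PP_l(n)$. Unwinding the definitions of \emph{step at $1$} and \emph{plateau at $1$}: every $\vec\alpha\in\Ss(n)\cup\NS(n)$ has $a_1=a_2+1$ (because the plateau of $\vec\alpha'$ must be flat at column~$1$), and every $\vec\alpha\in\PP_l(n)$ has $a_1=\dots=a_{l+1}$ and $a_{l+1}=a_{l+2}+1$. From these facts one checks directly: $\vec\alpha^{\downarrow_2}$, resp.\ $\vec\alpha^{\downarrow_{l+2}}$, is again weakly decreasing, so the whole right-hand side is contained in $\Lp{n+1}$; the operations $\vec\alpha\mapsto\vec\alpha^{\downarrow_i}$ are injective; $\Ss(n)\cap\NS(n)=\emptyset$ (for a fixed partition $\vec\alpha'$ the first nonzero height difference after its initial flat run is either $1$ or at least~$2$, not both); and, decisively, $m\apply{\vec\alpha^{\downarrow_1}}=1$ for $\vec\alpha\in\Lp n$, $m\apply{\vec\alpha^{\downarrow_2}}=2$ for $\vec\alpha\in\Ss(n)\cup\NS(n)$, and $m\apply{\vec\alpha^{\downarrow_{l+2}}}=l+2$ for $\vec\alpha\in\PP_l(n)$. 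This already yields the inclusion ``$\supseteq$'' together with pairwise disjointness of all summands: they live in different $m$-strata, except that inside the stratum $m=2$ disjointness of $\Ss(n)^{\downarrow_2}$ and $\NS(n)^{\downarrow_2}$ comes from $\Ss(n)\cap\NS(n)=\emptyset$ and injectivity of~$\downarrow_2$.

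The substantial direction is ``$\subseteq$'': producing a father for an arbitrary $\vec\gamma\in\Lp{n+1}$. If $m=1$ then $c_1>c_2$ and $\vec\alpha:=\apply{c_1-1,c_2,\dots,c_n}\in\Lp n$ satisfies $\vec\alpha^{\downarrow_1}=\vec\gamma$. If $m\ge 2$, I would remove the single grain at the end of the top row, i.e.\ set $\vec\alpha:=\apply{c_1,\dots,c_{m-1},c_m-1,c_{m+1},\dots}$; from $c_{m-1}=c_m=c_1>c_{m+1}$ one sees that $\vec\alpha$ is a partition of $n$ with $\vec\alpha^{\downarrow_m}=\vec\gamma$, and it remains only to place $\vec\alpha$ in the right set. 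For $m\ge 3$ a short computation gives $d_i\apply{\vec\alpha}=0$ for $i\le m-2$ and $d_{m-1}\apply{\vec\alpha}=1$, so $\vec\alpha\in\PP_{m-2}(n)$ and hence $\vec\gamma\in\PP_{m-2}(n)^{\downarrow_m}$ (note $m=(m-2)+2$). For $m=2$ one has $d_1\apply{\vec\alpha}=1$, so $\vec\alpha'=\apply{c_1-1,c_1-1,c_3,\dots}$ is a partition with $d_1\apply{\vec\alpha'}=0$ followed (for $n\ge 2$) by a genuine descent; hence $\vec\alpha$ has a slippery or a non-slippery step at~$1$, that is, $\vec\gamma\in\Ss(n)^{\downarrow_2}\cup\NS(n)^{\downarrow_2}$.

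I expect the bookkeeping in this last step to be the main obstacle: one must verify that the father built in the case $m\ge 3$ really lies in $\PP_{m-2}(n)$ and not merely exhibits some weaker cliff/plateau pattern at column~$1$, keep the index $l=m-2$ aligned throughout, and, in the case $m=2$, confirm that $\vec\alpha'$ does possess a descent below its top row --- which it does unless $c_1=1$, forcing $\vec\gamma=\apply{1,1}$ and $n=1$, a degenerate case to be checked by hand by comparing $\Lp{1}$ and $\Lp{2}$ directly under whatever boundary convention for partitions of small integers is in force. Everything else is a careful but mechanical unwinding of the cliff, step, and plateau definitions.
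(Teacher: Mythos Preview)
The paper does not prove this theorem: it is quoted from Latapy and Phan~\cite{LATAPY} and used as a black box for the subsequent results, so there is no in-paper proof to compare against.

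That said, your argument is correct and would serve well as a self-contained proof. Organising the case split around the multiplicity $m(\vec\gamma)$ of the largest part is exactly the right invariant: it separates the summands into the strata $m=1$, $m=2$, and $m\ge 3$, so disjointness and the inclusion ``$\supseteq$'' fall out simultaneously once you verify the claimed values of $m$ on each summand (your verifications are accurate). The father construction for ``$\subseteq$'' is also sound; in particular, for $m\ge 3$ the computation $d_{m-1}(\vec\alpha)=c_{m-1}-(c_m-1)=1$ places $\vec\alpha$ squarely in $\PP_{m-2}(n)$, and for $m=2$ the first nonzero height difference of $\vec\alpha'$ (which exists as soon as $c_1\ge 2$) decides between $\Ss(n)$ and $\NS(n)$. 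Your flagged boundary case $\vec\gamma=(1,1)$, $n=1$, is genuinely convention-dependent (one must decide whether $(1)\in\Lp{1}$ counts as having a slippery step at~$1$), and handling it by direct inspection of $\Lp{1}$ and $\Lp{2}$ is the right move.
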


Because these unions are disjoint and because the map $\vec{\alpha}\mapsto \vec{\alpha}^{\downarrow i}$ is injective, we have that every $\vec{\beta}\in \Lp{n+1}$ possesses a unique father in~$\Lp{n}$. Using this close connection between~$\Lp{n}$ and~$\Lp{n+1}$, in the next section we shall generate the elements of $\mathcal{J}\apply{\Lp{n+1}}$ from elements of $\mathcal{J}\apply{\Lp{n}}$ via the two steps described above. This will allow us to determine the number of objects and attributes of the standard context of~$\Lp{n}$.
\section{Standard context of~$\Lp{n}$}\label{sect:std-cxt}

In this section we study the standard context of~$\Lp{n}$, which is a restricted version of the context  $\apply{\Part(n),\Part(n),\leq}$. This restriction provides us with a smaller number of objects and attributes, and, most importantly, from these proper subsets of~$\Lp{n}$ one can recover the full information about the structure of~$\Lp{n}$. This is so since for a finite lattice~$\mathbb{L}$ one can prove that~$\mathbb{L}$ is isomorphic to the concept lattice $\underline{\mathfrak{B}}\apply{{\mathcal{J}}\apply{\mathbb{L}}, \mathcal{M}\apply{\mathbb{L}}, {\leq} \cap \apply{\mathcal{J}\apply{\mathbb{L}}\times \mathcal{M}\apply{\mathbb{L}}}}$, and hence our motivation to investigate the standard context of~$\Lp{n}$ in more detail.

\begin{example}\label{ex:standard-context-L6}
		We calculate the standard context of $\Lp{6}$, the diagram of which is shown in Figure~\ref{Lattices_L6_and_L7}. We have that
		\begin{align*}
		\mathcal{J}(\Lp{6})= \{&(2,1,1,1,1),(2,2,1,1),(2,2,2),(3,1,1,1),(3,3),(4,1,1),(5,1),(6)\} \\
		\mathcal{M}(\Lp{6})= \{&(1,1,1,1,1,1),(2,1,1,1,1),(2,2,2),(3,1,1,1),(3,3),(4,1,1),\\
		&(4,2),(5,1)\}.
		\end{align*}
Thus, the standard context, denoted by $\mathbb{K}\apply{\Lp{6}}$, is:
\begin{center}
\small
\begin{tabular}{|c||c|c|c|c|c|c|c|c|}\hline
$\mathbb{K}$&111111&$21111$&$222$&$3111$&$33$&$411$&$42$&$51$\\ \hline\hline
$21111$& &$\times$&$\times$&$\times$&$\times$&$\times$&$\times$&$\times$\\ \hline
$2211$& &&$\times$&$\times$&$\times$&$\times$&$\times$&$\times$\\ \hline
$222$& &&$\times$&&$\times$&$\times$&$\times$&$\times$\\ \hline
$3111$& &&&$\times$&$\times$&$\times$&$\times$&$\times$\\ \hline
$33$& &&&&$\times$&&$\times$&$\times$\\ \hline
$411$& &&&&&$\times$&$\times$&$\times$\\ \hline
$51$& &&&&&&&$\times$\\ \hline
$6$& &&&&&&&\\ \hline
\end{tabular}
\end{center}

Observe that while the context $\apply{\Lp{6},\Lp{6},\leq}$ has $11$ objects and $11$ attributes the standard context has only~$8$ of each. This advantage in size becomes much more pronounced for larger values of~$n$ than just~$n=6$, as we shall see below.
\end{example}

Determining the number of elements in~$\Lp{n}$ is a problem that has captivated mathematicians over the years. Whether our approach via formal concept analysis and the standard context $\mathbb{K}\apply{\Lp{n}}$ is to offer any advantage over working with~$\Lp{n}$ directly, remains, however, unclear until we show that~$\mathbb{K}\apply{\Lp{n}}$ stays much more manageable in size than~$\Lp{n}$ when~$n$ increases.
Therefore, it is natural to ask about the number of elements which are $\vee$-irreducible or $\wedge$-irreducible. To study the growth of the standard context, it is, in fact, sufficient to know how the number $|{\mathcal{J}}\apply{\Lp{n}}|$ is growing, since partition conjugation $^*$ is a lattice antiautomorphism~\cite{Brylawski}, and hence we have ${\mathcal{J}}\apply{\Lp{n}}^*= {\mathcal{M}}\apply{\Lp{n}}$ and thus $|{\mathcal{J}}\apply{\Lp{n}}|=|{\mathcal{M}}\apply{\Lp{n}}|$. In the following table we present $|{\mathcal{J}}\apply{\Lp{n}}|$ for $1\leq n\leq 8$. The apparent pattern in the last column suggests a relation between  $|{\mathcal{J}}\apply{\Lp{n}}|$  and $|{\mathcal{J}}\apply{\Lp{n+1}}|$.
\bgroup\small
\[\begin{array}{|*{5}{c|}}
				\hline
				\multicolumn{1}{|l|}{n} & \multicolumn{1}{l|}{|\mathcal{L}_n|} & \multicolumn{1}{l|}{|\mathcal{M}(\Lp{n})|} & \multicolumn{1}{l|}{|\mathcal{J}(\Lp{n})|} & \multicolumn{1}{l|}{|\mathcal{J}(\Lp{n+1})|-|\mathcal{J}(\Lp{n})|} \\ \hline
				1 & 1 & 0 & 0 & 1\\ \hline
				2 & 2 & 1 & 1 & 1 \\ \hline
				3 & 3 & 2 & 2 & 2 \\ \hline
				4 & 5 & 4 & 4 & 2\\ \hline
				5 & 7 & 6 & 6 & 2\\ \hline
				6 & 11 & 8 & 8 & 3 \\ \hline
				7 & 15 & 11 & 11 & 3 \\ \hline
				8 & 22 & 14 & 14 & 3\\ \hline
\end{array}\]
\egroup

For a finite lattice the join-irreducible elements can be characterized as those that cover precisely one element. In~$\Lp{n}$ these are those that have exactly one cliff (and no slippery step) or exactly one slippery step (and no cliff). In~\cite{Brylawski} Brylawski characterized the $\vee$-irreducible elements as follows.

\begin{lemma}[{\cite[Corollary 2.5]{Brylawski}}]\label{lem:char-join-irr}
For $n \geq 1$ the join-irreducible partitions from ${\mathcal{J}}\apply{\Lp{n}}$ can be categorized into four types where always $m,l,s\geq 1$:
\begin{description}
\item[Type~A:] $\tupone{k}{m}$ for $k\geq 2$.
\item[Type~B:] $\tupdos{k}{m}{k-1}{m+l}$ for $k\geq 2$.
\item[Type~C:] $\tupdos{k}{m}{1}{m+l}$ for $k\geq 3$.
\item[Type~D:] $\tuptres{k+2}{m}{k+1}{m+l}{1}{m+l+s}$ for $k\geq 2$.
\end{description}
\end{lemma}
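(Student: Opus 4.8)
The plan is to prove Lemma~\ref{lem:char-join-irr} by combining the covering characterization of Theorem~\ref{covering} with the dynamical vocabulary of cliffs and slippery steps. Since $\Lp{n}$ is finite, $\vec{\alpha}\in\mathcal{J}(\Lp{n})$ precisely when $\vec{\alpha}$ is not the minimum and $\lvert\mathcal{D}(\vec{\alpha})\rvert=1$, i.e.\ $\vec{\alpha}$ covers exactly one partition. By the two transition rules, $\mathcal{D}(\vec{\alpha})$ is obtained by applying rule~1 at each cliff of $\vec{\alpha}$ and rule~2 at each slippery step of $\vec{\alpha}$; distinct cliffs/steps at distinct columns always yield distinct directly reachable partitions (a grain falls or slips from a different column each time). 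Hence $\lvert\mathcal{D}(\vec{\alpha})\rvert=1$ if and only if $\vec{\alpha}$ has exactly one cliff and no slippery step, or exactly one slippery step and no cliff. So the first step is to reduce the lemma to the following combinatorial claim: classify all partitions of $n$ whose height-difference sequence $d_1(\vec{\alpha}),d_2(\vec{\alpha}),\ldots$ exhibits exactly one ``drop'' of the relevant kind.

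The second step is the actual case analysis on the shape of the sequence $(a_i)$. Write $\vec{\alpha}=\tup{a}{k}$ with $a_k\ge 1$, so the height differences are $d_i=a_i-a_{i+1}\ge 0$ for $i<k$ and the final positive part contributes a ``cliff'' of size $a_k$ at position $k$ (falling to $0$). I would organize the analysis by where the unique irreducibility-causing feature sits and what the part-sizes before and after it look like. If $\vec{\alpha}$ has a single cliff and no slippery step: the cliff is the drop from the last nonzero part down to $0$, forcing all earlier differences to be $0$, so $\vec{\alpha}=(k,k,\ldots,k)$ with $m$ copies of $k\ge 2$ — Type~A. (If the cliff were internal, say $d_j\ge 2$ with $j<k$, then either there is a second cliff at the end or one checks a slippery step appears, contradicting uniqueness; this bookkeeping is where care is needed.) If $\vec{\alpha}$ has a single slippery step and no cliff: all differences are $0$ or $1$, exactly one ``step'' pattern occurs, and one analyses the block of equal parts before the step, the value dropped to, and the block after. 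This yields three sub-shapes according to whether the level after the step is $k-1$ (Type~B), drops all the way to $1$ (Type~C), or whether the step is itself preceded by a plateau that makes the relevant column a \emph{slippery step} in the technical sense rather than a plateau (Type~D, with the extra $+1$ offsets $k+2,k+1$ reflecting that the grain slips over an intermediate plateau). Throughout, the side constraints $m,l,s\ge 1$ and $k\ge 2$ (resp.\ $k\ge 3$ for Type~C) come out of requiring the parts to be genuinely decreasing where claimed and the partition to be nontrivial.

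The main obstacle I expect is the precise handling of Type~D and the boundary between a ``slippery step'' and a ``slippery plateau'': one must unwind the definition $\vec{\alpha}'=(a_1,\ldots,a_j-1,\ldots,a_k)$ has a slippery plateau at $j$, verify that such $\vec{\alpha}$ has exactly one element in $\mathcal{D}(\vec{\alpha})$ (the grain slipping from column~$j$ to column $j+l+1$) and no cliff, and check that no competing feature is hidden in the plateau part of the diagram. Conversely one must confirm that the four listed families are pairwise disjoint (or note explicitly where they overlap) and that every member genuinely lies in $\mathcal{J}(\Lp{n})$ by exhibiting its unique lower cover. Since the statement is quoted from \cite[Corollary~2.5]{Brylawski}, I would either cite Brylawski's proof directly or, for self-containedness, present the reduction to the height-difference classification in full and then dispatch the four cases by direct inspection of the covering rules, flagging Type~D as the one requiring the slippery-step definition in earnest.
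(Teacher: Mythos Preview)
The paper does not supply its own proof of this lemma; it simply quotes the result as \cite[Corollary~2.5]{Brylawski}. So there is nothing to compare your argument against in the paper itself. That said, your proposal contains a genuine error in the case analysis, and it would not go through as written.

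Your reduction is fine: in the finite lattice~$\Lp{n}$, $\vec{\alpha}$ is join-irreducible iff $\lvert\mathcal{D}(\vec{\alpha})\rvert=1$, and by the transition rules this means $\vec{\alpha}$ has exactly one cliff and no slippery step, or exactly one slippery step and no cliff. The paper states this explicitly just before the lemma. The problem is your bifurcation of the four types between these two cases. You assert that ``single cliff, no slippery step'' forces Type~A, and that an internal cliff would produce either a second cliff at the end or a slippery step. This is false. Consider Type~C, say $(k,\ldots,k,1,\ldots,1)$ with $k\geq 3$: there is an internal cliff at position~$m$ (height difference $k-1\geq 2$), but the tail of~$1$'s ends with $d_{m+l}=1-0=1$, which is \emph{neither} a cliff nor a slippery step (after decrementing the last~$1$ to~$0$ there is no subsequent drop of height~$1$). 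Likewise Type~D has its unique cliff at position~$m+l$ (difference $(k+1)-1=k\geq 2$), while the step at position~$m$ is a \emph{non}-slippery step (the following plateau terminates in that cliff), and the final drop from~$1$ to~$0$ is again neither a cliff nor a slippery step. Type~B with $k\geq 3$ behaves the same way: one cliff at $m+l$ and a non-slippery step at~$m$.

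Consequently your placement of Types~C and~D under ``single slippery step, no cliff'' cannot work: in Type~C the drop from $k\geq 3$ down to~$1$ is a cliff, and in Type~D the drop from $k+1\geq 3$ down to~$1$ is a cliff. In fact the correct split is that Types~A, C, D, and~B with $k\geq 3$ all fall under ``exactly one cliff, no slippery step'', while only Type~B with $k=2$ (i.e.\ $(2,\ldots,2,1,\ldots,1)$) lives in the ``exactly one slippery step, no cliff'' case. If you reorganize the case analysis around the position of the unique cliff and the possible presence of at most one non-slippery step before it (plus a possible tail of~$1$'s after it), the four shapes drop out cleanly; alternatively, just cite Brylawski as the paper does.
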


When we apply to the elements of ${\mathcal{J}}\apply{\Lp{n}}$ the two steps described in Section~\ref{sect:Ln-Ln+1} to generate elements of $\Lp{n+1}$ from~$\Lp{n}$, we obtain the following proposition.

\begin{proposition}\label{Prop_Irreducible_Lp}
Let $n\geq 3$. For every partition $\vec{\alpha}\in {\mathcal{J}(\Lp{n})\setminus \set{(2,1,\ldots,1)}}$ among its (at most two) sons there is exactly one that belongs to $\mathcal{J}(\Lp{n+1})$; moreover, the partition $(2,1,\ldots,1) \in \mathcal{J}(\Lp{n})$ has two sons in $\mathcal{J}(\Lp{n+1})$.
\end{proposition}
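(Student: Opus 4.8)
The plan is to work directly with Brylawski's classification of join-irreducibles from Lemma~\ref{lem:char-join-irr}, going type by type. For each of Types~A--D I would first identify, using the transition-rule description from Section~\ref{sect:Ln-Ln+1}, which of the sets $\Cl(n)$, $\Ss(n)$, $\NS(n)$, $\PP_l(n)$, $\NP(n)$ the partition $\vec\alpha$ belongs to; this tells me whether $\vec\alpha$ has a right son and, if so, what it is. Then I would compute the left son $\vec\alpha^{\downarrow_1}$ and (when it exists) the right son explicitly, and check each candidate against the same classification of $\mathcal{J}(\Lp{n+1})$ — i.e.\ verify whether the candidate has exactly one cliff and no slippery step, or exactly one slippery step and no cliff. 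The claim is that in each case precisely one of the (at most two) sons survives, the sole exception being $(2,1,\dots,1)$, whose two sons are $(3,1,\dots,1)$ (Type~A after adding a grain to the first column, but careful: $(2,1,\dots,1)^{\downarrow_1}=(3,1,\dots,1)$, which is Type~A) and $(2,2,1,\dots,1)$, and one must check both are join-irreducible.

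Concretely, the bookkeeping goes as follows. A Type~A partition $\tupone{k}{m}=(k,k,\dots,k)$ with $k\ge 2$: its left son is $(k+1,k,\dots,k)$, still Type~A (after relabelling, it is of the shape $\tupdos{k+1}{1}{k}{m-1}$ when $m\ge 2$, which is Type~B, or Type~A when... — in any case one checks it has a single slippery step or single cliff); whether $\vec\alpha$ has a right son depends on the height difference $d_j$, which for Type~A is $0$ everywhere except a final cliff, so $\vec\alpha\in\NS(n)$ or $\NP(n)$-related, and one must see whether the right son $\vec\alpha^{\downarrow_2}$ is a partition and, if so, whether it lands in $\mathcal{J}$. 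For Types~B, C, D one does the analogous analysis: B has shape "$k$'s then $(k-1)$'s", so it has a single slippery step; C has "$k$'s then $1$'s" with $k\ge 3$, a single cliff; D has "$(k+2)$'s, $(k+1)$'s, $1$'s", with a slippery step and a cliff — here I would expect $\vec\alpha$ to have two candidate sons and need the most careful comparison, because D is the only type with two "features" in its profile. Adding a grain to column~$1$ or to the appropriate later column will destroy exactly one feature and preserve the other, so exactly one son stays join-irreducible.

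The main obstacle I anticipate is the Type~D case together with the boundary subcases where the parameters $m,l,s$ take their smallest values (so that two of the "plateaus" in Brylawski's profile collapse or a column becomes empty), since there the cliff/step structure can shift and the naive feature-count can mislead; these degenerate subcases have to be enumerated separately and matched against the four types for $\Lp{n+1}$. A secondary nuisance is making sure $\vec\alpha^{\downarrow_2}$ (or $\vec\alpha^{\downarrow_{l+2}}$) is genuinely a partition of $n+1$ before declaring it a son — this is exactly the condition "$\vec\alpha\in\Ss(n)\cup\NS(n)$" (resp.\ "$\vec\alpha\in\PP_l(n)$"), so the classification step must be done carefully and not skipped. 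Once all types and their degenerate subcases are tabulated, the statement follows by simple inspection of the table, with $(2,1,\dots,1)$ falling out as the unique exception because it is the only join-irreducible whose left son $(3,1,\dots,1)$ and right son $(2,2,1,\dots,1)$ are \emph{both} join-irreducible in $\Lp{n+1}$.
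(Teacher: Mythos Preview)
Your plan is exactly the paper's proof: split by Brylawski's four types (with subcases on the parameters $m,l,k$), determine for each $\vec\alpha$ which of $\Cl(n),\Ss(n),\NS(n),\PP_l(n),\NP(n)$ it lies in, compute the son(s), and match each against Lemma~\ref{lem:char-join-irr}.

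A few slips to correct when you actually carry out the tabulation. First, $(3,1,\dots,1)$ is Type~C (with $k=3$, $m=1$), not Type~A. Second, a Type~D partition has a \emph{non}-slippery step at position~$m$ (the plateau obtained after removing a grain there terminates in the cliff $k+1\to 1$), not a slippery one, so your ``slippery step and a cliff'' description of~D is off. Third, the number of sons is governed solely by the structure at column~$1$, not by a global ``feature count'': in fact Types~A and~C always lie in $\Cl(n)\cup\NP(n)$ and hence have only the left son, while Types~B and~D always lie in $\Ss(n)\cup\NS(n)\cup\bigcup_l\PP_l(n)$ and hence have two sons --- so Type~D is not singled out in the way you suggest, and the real work in the two-son cases (B and~D alike) is checking that the left son fails Lemma~\ref{lem:char-join-irr} while the right son matches one of the four templates, with $(2,1,\dots,1)$ the unique Type~B instance whose left son $(3,1,\dots,1)$ happens to be Type~C.
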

\begin{proof}
Every $\vec{\alpha}\in {\mathcal{J}}\apply{\Lp{n}}$ belongs to one of the four types from Lemma~\ref{lem:char-join-irr}. So we apply to $\vec{\alpha}$ the two steps from Section~\ref{sect:Ln-Ln+1} and analyze the resulting partitions.
 \begin{description}
  \item[Case 1:] If $\vec{\alpha}$ is a partition of type~A, then we consider two subcases:
			\begin{itemize}
				\item If $m=1$, then $\vec{\alpha}=(k)$ has a cliff at~$1$. Hence, it has the left son $\vec{\alpha}^{\downarrow_1}=(k+1)=(n+1)$,
				 which is a partition of type~A, but it has no right son. So we have $\vec{\alpha}^{\downarrow_1}\in {\mathcal{J}}\apply{\Lp{n+1}}$.
				\item If $m\geq 2$, then $\vec{\alpha}=\tupone{k}{m}$ has a non-slippery plateau at~$1$, hence it has only the left son $\vec{\alpha}^{\downarrow_1}=(k+1,k,\ldots,\stackrel{m}{k})$, which is a partition of type~B. Thus, $\vec{\alpha}^{\downarrow_1}\in \mathcal{J}(\Lp{n+1})$.  	
			\end{itemize}
 \item[Case 2:] If $\vec{\alpha}$ is a partition of type~B, then we consider three subcases:
      \begin{itemize}
 			  \item If $m=1, l\geq 1, k\geq 3$, then $\vec{\alpha}=(k,k-1,\ldots,\stackrel{1+l}{k-1})$ has a non-slippery step at~$1$. Hence, this partition has two distinct sons. The left one is $\vec{\alpha}^{\downarrow_1}={(k+1,k-1,\ldots,\stackrel{1+l}{k-1})}$, which is not $\vee$-irreducible, and the right one is $\vec{\alpha}^{\downarrow_{2}}=(k,k,k-1,\ldots,\stackrel{1+l}{k-1})$, which belongs to ${\mathcal{J}}\apply{\Lp{n+1}}$ because it is of type~B (if $l\geq 2$) or of type~A (if $l=1$).
			  \item If $m=1, l\geq 1, k= 2$, then $\vec{\alpha}=(2,1,\ldots,\stackrel{1+l}{1})$ has a slippery step at~$1$. Hence, the left son $\vec{\alpha}^{\downarrow_1}=(3,1,\ldots,\stackrel{1+l}{1})$ belongs to ${\mathcal{J}}\apply{\Lp{n+1}}$ because it is of type~C, and the right son $\vec{\alpha}^{\downarrow_{2}}=(2,2,1,\ldots,\stackrel{1+l}{1})$ also belongs to ${\mathcal{J}}\apply{\Lp{n+1}}$ because it is of type~B (if $l\geq 2$) or of type~A (if $l=1$).
			  \item If $m\geq 2, l\geq 1, k\geq 2$, then $\vec{\alpha}=\tupdos{k}{m}{k-1}{m+l} $ has a slippery plateau at~$1$ of length $m-1$, thus it has two sons. The left son $\vec{\alpha}^{\downarrow_1}=(k+1,k,\ldots,\stackrel{m}{k},k-1,\ldots,\stackrel{m+1}{k-1})$ is not  $\vee$-irreducible because for $k\geq 3$ it does not belong to any type described by Brylawski, and if $k=2$, then  $\vec{\alpha}^{\downarrow_1}=(3,2,\ldots,\stackrel{m}{2},1,\ldots,\stackrel{m+1}{1})$ is not  $\vee$-irreducible either, despite looking a bit like the form of type~$D$ (however it is not because for partitions of type~$D$ the first entry of the partition is greater or equal to~$4$). The right son
 $\vec{\alpha}^{\downarrow_{m+1}}=(k,\ldots,\stackrel{m}{k},\stackrel{m+1}{k},k-1,\ldots,\stackrel{m+l}{k-1})$ belongs to ${\mathcal{J}}\apply{\Lp{n+1}}$ because it is of type~B (if $l\geq 1$) or of type~A (if $l=1$).
 			\end{itemize}

  \item[Case 3:] If $\vec{\alpha}$ is a partition of type~C, then we consider two subcases:
    \begin{itemize}
 			\item If $m=1, l\geq 1, k\geq 3$, then $\vec{\alpha}$ has a cliff at~$1$, and it has only the left son $\vec{\alpha}^{\downarrow_1}=(k+1,1,\ldots,\stackrel{1+l}{1})$, belonging to ${\mathcal{J}}\apply{\Lp{n+1}}$ because it is of type C.
			\item If $m\geq 2, l\geq 1, k\geq 3$, then $\vec{\alpha}$ has a non-slippery plateau at~$1$. Hence, it only has the left son $\vec{\alpha}^{\downarrow_1}=(k+1,k,\ldots,\stackrel{m}{k},1,\ldots,\stackrel{m+l}{1})$, which is of type~D. Thus, $\vec{\alpha}^{\downarrow_1}\in {\mathcal{J}}\apply{\Lp{n+1}}$.
 			\end{itemize}

  \item[Case 4:] If $\vec{\alpha}$ is a partition of type~D, then we consider two subcases:
    \begin{itemize}
 			\item If $m\geq 2, l,s\geq 1, k\geq 2$, then $\vec{\alpha}$ has a slippery plateau at~$1$ of length $m-1$, hence it has two sons.\par
 			The left son $\vec{\alpha}^{\downarrow_1}=(k+3,k+2,\ldots,\stackrel{m}{k+2},k+1,\ldots,\stackrel{m+l}{k+1},1,\ldots,\stackrel{m+l+s}{1})$ is not  $\vee$-irreducible. The right son
\[\vec{\alpha}^{\downarrow_{m+1}}=(k+2,\ldots,\stackrel{m}{k+2},\stackrel{m+1}{k+2},k+1\ldots,\stackrel{m+l}{k+1},1,\ldots,\stackrel{m+l+s}{1})\]
belongs to ${\mathcal{J}}\apply{\Lp{n+1}}$, because it is of type~D (if $l\geq 2, s\geq 1$) or of type~C (if $l=1,s\geq 1$). 			
			\item If $m=1, l\geq 1, k\geq 2$, then $\vec{\alpha}$ has a non-slippery step at~$1$. Hence, the left son $\vec{\alpha}^{\downarrow_1}=(k+3,k+1,\ldots,\stackrel{m+l}{k+1},1,\ldots,\stackrel{m+l+s}{1})$ is not $\vee$-irreducible, and the right son
$\vec{\alpha}^{\downarrow_{2}}=(k+2,k+2,k+1,\ldots,\stackrel{m+l}{k+1},1,\ldots,\stackrel{m+l+s}{1})$ belongs to $ {\mathcal{J}}\apply{\Lp{n+1}}$
because it is of type~D (if $l\geq 2, s\geq 1$) or of type~C (if $l=1,s\geq 1$). 
			\end{itemize}
\end{description}

This shows that every element $\vec{\alpha}\in  {\mathcal{J}}\apply{\Lp{n}}\setminus\set{(2,1,\dotsc,1)}$ has precisely one son which belongs to $\mathcal{J}\apply{\Lp{n+1}}$ and the only partition that has two sons belonging to $ {\mathcal{J}}\apply{\Lp{n+1}}$ is $\apply{2,1,\ldots,1}$.\qed
\end{proof}

Exploiting Proposition~\ref{Prop_Irreducible_Lp}, we can now define a map $\eta\colon\mathcal{J}(\Lp{n})\to \mathcal{J}(\Lp{n+1})$ as follows:
if $\vec{\alpha} \in  \mathcal{J}(\Lp{n})\setminus \set{(2,1,\ldots,1)}$, denote by $\eta(\vec{\alpha})$ the unique son of~$\vec{\alpha}$ that belongs to~$\mathcal{J}\apply{\Lp{n+1}}$; moreover, let $\eta(2,1,\ldots,\stackrel{l+1}{1}):= (2, 2, 1,\ldots,\stackrel{l+1}{1})$.
Although the partition $(2,1,\ldots,\stackrel{l+1}{1})$ has two sons that belong to ${\mathcal{J}}\apply{\Lp{n+1}}$, it is convenient to ignore the left son in the definition of~$\eta$.
From the proof of Proposition~\ref{Prop_Irreducible_Lp}, we can work out an explicit expression for~$\eta$.
For any $\vec{\alpha}\in {\mathcal{J}}(\Lp{n})$ we have
\begin{align}\label{defex}
	\eta(\vec{\alpha})=\begin{cases}
	\vec{\alpha}^{\downarrow_1} &\text{ if } \vec{\alpha} \in \Cl(n) \cup \NP(n),\\
	\vec{\alpha}^{\downarrow_2} &\text{ if } \vec{\alpha} \in \Ss(n) \cup \NS(n),\\
	\vec{\alpha}^{\downarrow_{l+2}} &\text{ if } \vec{\alpha} \in \PP_l(n) \text{ for some } 1\leq l<n.
	\end{cases}
	\end{align}
Since~$\eta$ is injective we have $\left|\eta\apply{{\mathcal{J}}\apply{\Lp{n}}}\right|=\left|{\mathcal{J}}\apply{\Lp{n}}\right|$. Moreover, we have
\[\left| {\mathcal{J}}\apply{\Lp{n+1}}\right|=\left|\eta\apply{{\mathcal{J}}\apply{\Lp{n}}}\right|+ \left|{\mathcal{J}}\apply{\Lp{n+1}} \setminus \eta\apply{{\mathcal{J}}\apply{\Lp{n}}}\right|.\]

To calculate $\left| {\mathcal{J}}\apply{\Lp{n+1}}\right|$, we have to identify the elements of ${\mathcal{J}}\apply{\Lp{n+1}}$ that are not in the image of~$\eta$.

\begin{lemma}\label{imeta}
For $n\geq3$ we have $\mathcal{J}(\Lp{n+1}) \setminus \eta(\mathcal{J}(\Lp{n}))=E_1 \cup E_2$, where~$E_1$ and~$E_2$ are the following exceptional sets of partitions of~$n+1$:
\begin{align*}
  E_1&:=\{(2,1,\ldots,\stackrel{n}{1})\},&
  E_2&:=\{(3,\ldots,\stackrel{m}{3},1,\ldots,\stackrel{m+l}{1}) \mid m\geq 1, l\geq 1 \}.
\end{align*}
\end{lemma}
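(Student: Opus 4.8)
The plan is to characterize membership in $\mathcal{J}(\Lp{n+1})$ directly via Brylawski's four types (Lemma~\ref{lem:char-join-irr}), and then determine for which join-irreducible partitions $\vec{\beta}$ of $n+1$ the father (which is unique, by Theorem~\ref{recur}) is itself join-irreducible in $\Lp{n}$ and has $\vec{\beta}$ as its $\eta$-image. Concretely, I would take an arbitrary $\vec{\beta}\in\mathcal{J}(\Lp{n+1})$, split into the four types A, B, C, D, and in each case compute the father $\vec{\gamma}$ by reversing the two construction steps: remove a grain from column~$1$ if $\vec{\beta}=\vec{\gamma}^{\downarrow_1}$, or from the appropriate later column otherwise. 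The key point is that the father is obtained by locating which of the disjoint pieces of Theorem~\ref{recur} contains $\vec{\beta}$, i.e.\ examining the shape of $\vec{\beta}$ near its last nonzero entry and its first columns.

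For the forward inclusion $E_1\cup E_2 \subseteq \mathcal{J}(\Lp{n+1})\setminus\eta(\mathcal{J}(\Lp{n}))$: first, $(2,1,\ldots,1)\in\Lp{n+1}$ is of type~A (with $k=2$, reading it as $\tupdos{2}{1}{1}{n}$... actually it is type~B or a boundary case) — in any event it is easily checked to be join-irreducible, and its father in $\Lp{n}$ is $(1,1,\ldots,1)=(2,1,\ldots,1)^{\downarrow_?}$; more carefully, the father of $(2,1,\dots,\stackrel{n}{1})$ is $(1,\dots,\stackrel{n}{1})\in\Lp{n}$ obtained via a slippery step, but $(1,\ldots,1)$ is the minimum of $\Lp{n}$, hence not join-irreducible, so $(2,1,\ldots,1)\notin\eta(\mathcal{J}(\Lp{n}))$. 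For $E_2$, a partition $\vec{\beta}=(3,\ldots,\stackrel{m}{3},1,\ldots,\stackrel{m+l}{1})$ is of type~C (when $m\geq 1$, $k=3$); I must check it lies in $\mathcal{J}(\Lp{n+1})$ and that its unique father is $(3,\ldots,3,1,\ldots,1)^{\downarrow_1\text{-reverse}}=(2,\ldots,2,1,\ldots,1)$ via removing a grain from column~1 — but that father is \emph{not} join-irreducible (it is the type-D-looking but non-irreducible partition flagged in Case~2 of the proof of Proposition~\ref{Prop_Irreducible_Lp}), so $\vec{\beta}\notin\eta(\mathcal{J}(\Lp{n}))$. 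One must also separately verify $\vec{\beta}\notin\eta(\text{anything else})$, which follows from injectivity of $\eta$ and the father being unique.

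For the reverse inclusion, I would show that any $\vec{\beta}\in\mathcal{J}(\Lp{n+1})$ not in $E_1\cup E_2$ is actually $\eta(\vec{\gamma})$ for its father $\vec{\gamma}$, which amounts to proving $\vec{\gamma}\in\mathcal{J}(\Lp{n})$ in every such case. This is essentially the content of the case analysis in Proposition~\ref{Prop_Irreducible_Lp} run in reverse: going through types A--D of $\vec{\beta}$, one identifies the father $\vec{\gamma}$, recognizes it as one of Brylawski's types (or as $(2,1,\ldots,1)$, whose left son is excluded but whose right son is in $E_2$-adjacent territory — care needed here), and concludes. The bookkeeping must confirm that the only $\vec{\beta}\in\mathcal{J}(\Lp{n+1})$ whose father fails to be join-irreducible are exactly those in $E_1$ (father is $0_{\Lp{n}}$) and $E_2$ (father is the non-irreducible partition $(2,\ldots,2,1,\ldots,1)$ or $(3,\ldots,3,2,\ldots,2,1,\ldots,1)$-type that appeared as a ``left son'' in Cases~2 and~4).

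The main obstacle I anticipate is the careful case bookkeeping at the boundaries: distinguishing when $\vec{\beta}$'s father is reached by $^{\downarrow_1}$ versus a later column requires precisely reading off whether $\vec{\beta}$'s father had a cliff, a (non-)slippery step, or a (non-)slippery plateau at~$1$, and the type-B/type-D partitions with small parameters ($k=2$, $l=1$, $m=1$) collapse into degenerate shapes that must be handled individually. In particular one must make sure the partitions $(3,\ldots,\stackrel{m}{3},1,\ldots,\stackrel{m+l}{1})$ genuinely exhaust the ``missing'' type-C elements and that no type-B, type-A, or type-D element of $\mathcal{J}(\Lp{n+1})$ slips through as also being unreachable by $\eta$; this is where I would cross-check against the recursion $|\mathcal{J}(\Lp{n+1})| = |\mathcal{J}(\Lp{n})| + 1 + |E_2\cap\Lp{n+1}|$ implied by the table to be sure the count is consistent.
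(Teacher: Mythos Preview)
Your overall approach---split $\vec{\beta}\in\mathcal{J}(\Lp{n+1})$ by Brylawski type, compute its unique father via Theorem~\ref{recur}, and check whether the father is join-irreducible and whether $\eta$ selects $\vec{\beta}$---is exactly what the paper does.

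However, your treatment of the $E_2$ partitions misidentifies the father. For $\vec{\beta}=(3,\ldots,\stackrel{m}{3},1,\ldots,1)$ with $m\geq 2$, undoing $^{\downarrow_1}$ gives $(2,3,\ldots,3,1,\ldots,1)$, which is not a partition, so $\vec{\beta}\notin\Lp{n}^{\downarrow_1}$; the actual father is $(3,\ldots,\stackrel{m-1}{3},2,1,\ldots,1)$, obtained as a right son. That father is indeed not join-irreducible (it fits none of types A--D), so your conclusion survives, but not via the partition $(2,\ldots,2,1,\ldots,1)$ you named. For $m=1$, the father of $(3,1,\ldots,1)$ is $(2,1,\ldots,1)$, which \emph{is} join-irreducible; the reason $(3,1,\ldots,1)\notin\eta(\mathcal{J}(\Lp{n}))$ is not a failure of irreducibility but the explicit choice in the definition of~$\eta$ to map $(2,1,\ldots,1)$ to its right son $(2,2,1,\ldots,1)$. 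You flag this boundary case later in your proposal, but the argument as written for the forward inclusion on $E_2$ is incorrect and needs to separate $m=1$ from $m\geq 2$ with the correct fathers in each.
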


\begin{proof}
Let $\vec{\beta}\in {\mathcal{J}}\apply{\Lp{n+1}}$. Then, similarly to Proposition~\ref{Prop_Irreducible_Lp}, we consider several cases, according to the four types of $\vee$-irreducible partitions described by Brylawski, cf.\ Lemma~\ref{lem:char-join-irr}.
 \begin{description}
  \item[Case 1:] If~$\vec{\beta}$ is of the form $\tupone{k}{m}$, with $k\geq 2$ and $m\geq 1$, then we consider two sub-cases:
\begin{itemize}
\item If $m\geq 2$, then~$\vec{\beta}$ is a son of $(k\ldots,k,\stackrel{m}{k-1}) \in {\mathcal{J}}\apply{\Lp{n}}$.
\item If $m=1$, then $\vec{\beta}=(n+1)$ is a son of $(n)\in {\mathcal{J}}\apply{\mathcal{L}_n}$.
\end{itemize}
\item[Case 2:] If $\vec{\beta}$ is of the form $(k,\ldots,\stackrel{m}{k},k-1,\ldots, \stackrel{m+l}{k-1})$, with $k\geq 2$ and $m,l\geq 1$, then we consider three sub-cases:
\begin{itemize}
\item If $m=1$, $l\geq 1$, and $k\geq 3$, then $\vec{\beta}$ is a son of $(k-1,\ldots,k-1) \in {\mathcal{J}}\apply{\Lp{n}}$.
\item If $m=1$, $l\geq 1$, and $k= 2$, then $\vec{\beta}=(2,1,\ldots,1)$ is a son of $(1,1,\ldots,1)$, which does not belong to $\mathcal{J}(\Lp{n})$, but~$\vec{\beta}$ does not arise from any of the four cases of Proposition~\ref{Prop_Irreducible_Lp}.
\item If $m\geq 2$, $l\geq 1$, and $k\geq 2$, then $\vec{\beta}$ is a son of $(k,\ldots\!\stackrel{m-1}{,k,}\!k-1,\ldots,k-1)$ from $\mathcal{J}(\Lp{n})$.
\end{itemize}
\item[Case 3:] If $\vec{\beta}$ is of the form $(k,\ldots,\stackrel{m}{k},1,\ldots,\stackrel{m+l}{1})$, with $k\geq 2$ and $m,l\geq 1$, then we consider three sub-cases:
\begin{itemize}
\item If $m=1$, $l\geq 1$, and $k\geq 3$, then $\vec{\beta}$ is the left son of $(k-1,1,\ldots,\stackrel{1+l}{1})$. For $k\geq 4$ it belongs to ${\mathcal{J}}\apply{\Lp{n}}$, so $\vec{\beta}\in\eta\apply{\mathcal{J}\apply{\Lp{n}}}$. However, $\vec{\beta}\notin\eta\apply{\mathcal{J}\apply{\Lp{n}}}$ when $k=3$ as~$\eta$ chooses the right son of $(2,1\ldots,1)$.
\item If $m\geq 2$, $l\geq 1$, and $k\geq 4$, then $\vec{\beta}$ is the right son of the type~D partition $(k,\ldots,\stackrel{m-1}{k},\stackrel{m}{k-1},1,\ldots,\stackrel{m+l}{1}) \in \mathcal{J}(\Lp{n})$.
\item If $m\geq 2$, $l\geq 1$, and $k=3$, then $\vec{\beta}=(3,\ldots,\stackrel{m}{3},1,\ldots,\stackrel{m+l}{1})$ is the right son of $(3,\ldots,\stackrel{m-1}{3},\stackrel{m}{2},1,\ldots,\stackrel{m+l}{1}) \notin \mathcal{J}(\Lp{n})$,
but~$\beta$ is not obtained from any of the four cases (in the proof) of Proposition~\ref{Prop_Irreducible_Lp}.
\end{itemize}
\item[Case 4:] If $\vec{\beta}$ is of the form $(k+2,\ldots,\stackrel{m}{k+2},k+1,\ldots,\stackrel{m+l}{k+1},1,\ldots,\stackrel{m+l+s}{1})$ with $k\geq 2$ and $m,l,s\geq 1$, then we consider two sub-cases:
\begin{itemize}
\item If $m=1$ and $l,s\geq 1$, then~$\vec{\beta}$ is the left son of $(k+1,\ldots,\stackrel{1+l}{k+1},1,\ldots,\!\!\!\!\stackrel{1+l+s}{1})$, which belongs to ${\mathcal{J}}\apply{\mathcal{L}_n}$.
\item If $m\geq 2$ and $l,s\geq 1$, then $\vec{\beta}$ is the right son of the type~D partition $(k+2,\ldots,\stackrel{m-1}{k+2},\stackrel{m}{k+1},\ldots,\stackrel{m+l}{k+1},1,\ldots,\stackrel{m+l+s}{1})\in{\mathcal{J}}\apply{\mathcal{L}_n}$.
\end{itemize}
\end{description}

Thus, almost all elements of $J\apply{\mathcal{L}_{n+1}}$ are sons of some element in $J\apply{\mathcal{L}_n}$ except for those of the form $(3,3,\ldots,\stackrel{m}{3},1,\ldots,\stackrel{m+l}{1})$, with $m\geq 2$ and $l\geq 1$, or the partition $\apply{2,1,\ldots,1}$. Additionally, the partition $\apply{3,1,\ldots,1}$, which we could get from a father in ${\mathcal{J}}\apply{\Lp{n}}$, was excluded from the image of~$\eta$ by definition. Therefore, we conclude that the elements of ${\mathcal{J}}\apply{\Lp{n+1}}$ that are not in the image of~$\eta$ are all of the form $(3,\ldots,\stackrel{m}{3},1,\ldots,\stackrel{m+l}{1})$ for $m,l\geq 1$ or they are the partition $\apply{2,1,\ldots,1}$.\qed
\end{proof}

We can now describe how to construct the set ${\mathcal{J}}\apply{\Lp{n+1}}$ from ${\mathcal{J}}\apply{\Lp{n}}$.
\begin{theorem}\label{constsup}
		Let $n\geq 3$. Then 
		\begin{multline*}
		\mathcal{J}(\Lp{n+1})= \left(\mathcal{J}(\Lp{n})\cap \Cl(n) \right)^{\downarrow_1} \sqcup \left(\mathcal{J}(\Lp{n})\cap \NP(n) \right)^{\downarrow_1}
		\sqcup \left(\mathcal{J}(\Lp{n})\cap \Ss(n) \right)^{\downarrow_2}\\  \sqcup \left(\mathcal{J}(\Lp{n})\cap \NS(n) \right)^{\downarrow_2} 
		\sqcup\bigsqcup_{1\leq l <n} \left(\mathcal{J}(\Lp{n})\cap \PP_l(n) \right)^{\downarrow_{l+2}} \sqcup E_1(n+1)  \sqcup E_2(n+1),
		\end{multline*}
%
		where $E_1(k),E_2(k)\subseteq\Lp{k}$ denote:
		\begin{align*}
		E_1(k)& =\{(2,1,\ldots,1)\},&
		E_2(k)&=\{(3,\ldots,\stackrel{m}{3},1,\ldots,\stackrel{m+l}{1}) \mid m\geq 1, l\geq 1 \}.
		\end{align*}
\end{theorem}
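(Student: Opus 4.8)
The statement is essentially a bookkeeping consequence of the results already established, so the plan is to combine Theorem~\ref{recur} (the Latapy--Phan decomposition of $\Lp{n+1}$), Proposition~\ref{Prop_Irreducible_Lp} (each $\vee$-irreducible of $\Lp{n}$ has exactly one $\vee$-irreducible son, except $(2,1,\ldots,1)$), the explicit formula~\eqref{defex} for the map~$\eta$, and Lemma~\ref{imeta} (which identifies $\mathcal{J}(\Lp{n+1})\setminus\eta(\mathcal{J}(\Lp{n}))$ as $E_1\cup E_2$). First I would note that $\eta(\mathcal{J}(\Lp{n}))$ partitions according to the case distinction in~\eqref{defex}: writing $\mathcal{J}(\Lp{n})$ as the disjoint union of its intersections with $\Cl(n)$, $\NP(n)$, $\Ss(n)$, $\NS(n)$ and the $\PP_l(n)$ for $1\le l<n$ (these are pairwise disjoint because a $\vee$-irreducible partition has exactly one cliff or exactly one slippery step, never both, and a partition cannot simultaneously be a slippery and a non-slippery step, nor lie in $\PP_l$ for two different~$l$), the image under~$\eta$ is exactly
\[
\left(\mathcal{J}(\Lp{n})\cap\Cl(n)\right)^{\downarrow_1}\cup\left(\mathcal{J}(\Lp{n})\cap\NP(n)\right)^{\downarrow_1}\cup\left(\mathcal{J}(\Lp{n})\cap\Ss(n)\right)^{\downarrow_2}\cup\left(\mathcal{J}(\Lp{n})\cap\NS(n)\right)^{\downarrow_2}\cup\bigcup_{1\le l<n}\left(\mathcal{J}(\Lp{n})\cap\PP_l(n)\right)^{\downarrow_{l+2}}.
\]

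\textbf{Disjointness.} The next step is to verify that all seven pieces on the right-hand side of the claimed identity are pairwise disjoint. For the five $\eta$-pieces this follows from the injectivity of~$\eta$ together with the disjointness of the five source sets just mentioned. To separate the $\eta$-pieces from $E_1(n+1)\sqcup E_2(n+1)$, I would invoke Lemma~\ref{imeta}: it says precisely that $\eta(\mathcal{J}(\Lp{n}))$ and $E_1\cup E_2$ are disjoint (and that $E_1=E_1(n+1)$, $E_2=E_2(n+1)$ under the renaming used here). Finally $E_1(n+1)$ and $E_2(n+1)$ are disjoint because $(2,1,\ldots,1)$ has first entry~$2$ whereas every element of $E_2(n+1)$ has first entry~$3$. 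Hence the right-hand side is a genuine disjoint union, justifying the $\sqcup$ symbols.

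\textbf{Equality of the two sides.} With disjointness in hand, the identity reduces to the set equality $\mathcal{J}(\Lp{n+1})=\eta(\mathcal{J}(\Lp{n}))\cup E_1(n+1)\cup E_2(n+1)$. The inclusion $\supseteq$ is immediate: $\eta$ maps into $\mathcal{J}(\Lp{n+1})$ by Proposition~\ref{Prop_Irreducible_Lp}, and one checks directly from Lemma~\ref{lem:char-join-irr} that $(2,1,\ldots,1)\in\Lp{n+1}$ is of type~A with $k=2$ (it covers only $(1,\ldots,1)$, equivalently has exactly one slippery step) and that each $(3,\ldots,\stackrel{m}{3},1,\ldots,\stackrel{m+l}{1})$ is of type~C (with $k=3$) when $m\ge 2$ and of type~A-or-C shape when $m=1$ — in all cases $\vee$-irreducible. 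The inclusion $\subseteq$ is exactly the content of Lemma~\ref{imeta}: any $\vec{\beta}\in\mathcal{J}(\Lp{n+1})$ either lies in $\eta(\mathcal{J}(\Lp{n}))$ or lies in $E_1\cup E_2$. Substituting the description of $\eta(\mathcal{J}(\Lp{n}))$ as the five-fold union above then yields the displayed formula, completing the proof.

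\textbf{Main obstacle.} There is no real obstacle here — the theorem is a clean repackaging of Proposition~\ref{Prop_Irreducible_Lp} and Lemma~\ref{imeta}. The only point requiring a little care is confirming that the five classes $\Cl(n)$, $\NP(n)$, $\Ss(n)$, $\NS(n)$, $\PP_l(n)$, when intersected with $\mathcal{J}(\Lp{n})$, are pairwise disjoint and exhaust $\mathcal{J}(\Lp{n})$, so that the case split in~\eqref{defex} is exhaustive and non-overlapping; this is where one should explicitly recall that a finite-lattice $\vee$-irreducible of $\Lp{n}$ is characterized by having a single cliff (and no slippery step) or a single slippery step (and no cliff), and a single grain descent is governed by a cliff, a slippery step, or a (slippery or non-slippery) plateau at position~$1$. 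Everything else is formal.
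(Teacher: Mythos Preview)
Your proposal is correct and follows essentially the same approach as the paper's proof, which also writes $\mathcal{J}(\Lp{n+1})=\eta(\mathcal{J}(\Lp{n}))\sqcup\bigl(\mathcal{J}(\Lp{n+1})\setminus\eta(\mathcal{J}(\Lp{n}))\bigr)$, expands $\eta(\mathcal{J}(\Lp{n}))$ via the case split~\eqref{defex}, and invokes Lemma~\ref{imeta} for the complement. One cosmetic slip: $(2,1,\ldots,1)$ is of type~B (with $k=2$, $m=1$) rather than type~A, and $(3,1,\ldots,1)$ is of type~C, not type~A; this does not affect your argument.
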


\begin{proof}
Obviously, $\mathcal{J}(\Lp{n+1})=\eta(\mathcal{J}(\Lp{n})) \sqcup (\mathcal{J}(\Lp{n+1})\setminus \eta(\mathcal{J}(\Lp{n})))$, and,
from the definition of~$\eta$ given in~\eqref{defex}, we have
		\begin{multline*}
		\eta(\mathcal{J}(\Lp{n}))= \left(\mathcal{J}(\Lp{n})\cap \Cl(n) \right)^{\downarrow_1} \sqcup \left(\mathcal{J}(\Lp{n})\cap \NP(n) \right)^{\downarrow_1}
		\sqcup \left(\mathcal{J}(\Lp{n})\cap \Ss(n) \right)^{\downarrow_2}\\ \sqcup \left(\mathcal{J}(\Lp{n})\cap \NS(n) \right)^{\downarrow_2}
		\sqcup\bigsqcup_{1\leq l <n} \left(\mathcal{J}(\Lp{n})\cap \PP_l(n) \right)^{\downarrow_{l+2}}.
		\end{multline*}
		By Lemma~\ref{imeta}, it follows that $\mathcal{J}(\Lp{n+1})\setminus \eta(\mathcal{J}(\Lp{n}))=E_1(n+1) \sqcup E_2(n+1)$.\qed
\end{proof}

Counting the elements of $\mathcal{J}(\Lp{n+1})$ was the main motivation to study the relationship between $\mathcal{J}(\Lp{n})$ and $\mathcal{J}(\Lp{n+1})$. To finally achieve this, we need one more result.

\begin{lemma}\label{numpart}
		Let $n\geq 3$. The number of partitions of $n+1$ of the form
    \[(3,\ldots,\stackrel{x}{3},1,\ldots,\stackrel{x+y}{1})\]
    with $x\geq 1$ and $y\geq 1$, is $\left\lfloor n/3 \right\rfloor$, i.e.,
		$|E_2(n+1)|=\left\lfloor n/3 \right\rfloor$.
	\end{lemma}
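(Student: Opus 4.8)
The plan is to count directly the partitions of $n+1$ that have the prescribed shape $(3,\ldots,\stackrel{x}{3},1,\ldots,\stackrel{x+y}{1})$, i.e., consisting of exactly $x$ parts equal to $3$ followed by exactly $y$ parts equal to $1$, where both $x\geq 1$ and $y\geq 1$. Such a partition is completely determined by the pair $(x,y)$, so the task reduces to counting the integer solutions of the single Diophantine equation $3x + y = n+1$ subject to $x\geq 1$ and $y\geq 1$.

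Concretely, I would argue as follows. For each fixed $x\geq 1$ the value of $y$ is forced to be $y = n+1-3x$, and this yields a genuine partition of the required form precisely when $y\geq 1$, i.e., when $3x \leq n$, equivalently $x \leq n/3$, equivalently (since $x$ is an integer) $1 \leq x \leq \lfloor n/3\rfloor$. Conversely every partition of the stated shape arises from exactly one such $x$. Hence the number of these partitions equals the number of integers $x$ with $1\leq x\leq \lfloor n/3\rfloor$, which is $\lfloor n/3\rfloor$. One small sanity check worth including: since $n\geq 3$ we have $\lfloor n/3\rfloor \geq 1$, so the set $E_2(n+1)$ is nonempty, consistent with the fact that $(3,1)$ (padded with the right number of $1$'s) is always such a partition for $n\geq 3$.

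The only thing requiring a moment's care is the bijection between the set $E_2(n+1)$ and the admissible values of $x$: one must note that $x$ and $y$ are recorded as the \emph{exact} multiplicities of the parts $3$ and $1$, so distinct values of $x$ give distinct partitions (they differ already in how many $3$'s they contain), and no partition is counted twice. There is no real obstacle here — the statement is essentially a counting identity for a linear equation in one variable — and the proof is a short computation; the substantive content of the surrounding development lies in Lemma~\ref{imeta} and Theorem~\ref{constsup}, for which this lemma merely supplies the final numeric ingredient needed to turn the recursive description of $\mathcal{J}(\Lp{n+1})$ into an actual recurrence for $\lvert\mathcal{J}(\Lp{n})\rvert$.
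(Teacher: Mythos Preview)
Your proof is correct and follows essentially the same approach as the paper: both set up the bijection with integer solutions of $3x+y=n+1$ subject to $x\geq 1$, $y\geq 1$, and count them by ranging over the admissible values of~$x$. The paper inserts a cosmetic change of variable (replacing $y$ by $y-1$ to obtain $3x+y=n$ with $y\geq 0$) before invoking the division algorithm, whereas you pass directly from $y\geq 1$ to $3x\leq n$ and hence $x\leq \lfloor n/3\rfloor$; this is a minor stylistic difference, not a different method.
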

\begin{proof}
		We have a one-to-one correspondence between each partition of the form  $(3,\ldots,\stackrel{x}{3}, 1,\ldots,\stackrel{x+y}{1})$ and each integer solution of
		\begin{equation}
		3x+y=n+1 \text{ subject to } x\geq 1, y \geq 1.
		\end{equation}
		 If we subtract one from both sides in the last equation and change the variable, then we obtain
		\begin{equation}\label{sols}
		3x+y=n \text{ subject to } x\geq 1, y \geq 0.
		\end{equation}
		Thus, the number of solutions of~\eqref{sols} will be the same as the number of partitions in~$E_2(n+1)$. Moreover, we have		
		\begin{align*}
		3\left\lfloor \dfrac{n}{3} \right\rfloor+r=n, 
		\end{align*}
		where $0\leq r <3$. If $x \in \set{1,\ldots,\lfloor n/3 \rfloor}$, then ${n-3x \geq n-3 \lfloor n/3 \rfloor=r \geq 0}$. Letting $y=n-3x$, we have that $(x,y)$ is a solution of~\eqref{sols}. But if $x>\lfloor n/3 \rfloor$, then $x \geq \lfloor n/3 \rfloor+1$, implying $y=n-3x\leq {n-3(\lfloor n/3 \rfloor+1)}=r-3<0$, whence $(x,y)$ is no solution. All solutions of~\eqref{sols} are $\set{(x,y) \mid 1\leq x\leq\lfloor n/3 \rfloor, y=n-3x}$; thus, there are $\lfloor n/3 \rfloor$ of them, and therefore $\lfloor n/3 \rfloor$ partitions in $E_2(n+1)$.\qed
		\end{proof}
		
The following theorem reveals the pattern that appeared in the last column of the table shown after Example~\ref{ex:standard-context-L6}.
	\begin{theorem}\label{thm:recursion}
    Starting from $|\mathcal{J}(\Lp{1})|=0$, for every $n\geq 1$ we have the recursion
    \[
    |\mathcal{J}(\Lp{n+1})|=|\mathcal{J}(\Lp{n})|+\left\lfloor \dfrac{n}{3} \right\rfloor+1.
    \]
	\end{theorem}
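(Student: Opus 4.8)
The plan is to prove the recursion by induction-free bookkeeping, simply assembling the cardinality count from the structural decomposition already established. For $n\geq 3$, Theorem~\ref{constsup} expresses $\mathcal{J}(\Lp{n+1})$ as a \emph{disjoint} union of seven pieces. Five of these pieces are images under the injective maps $\vec{\alpha}\mapsto\vec{\alpha}^{\downarrow_i}$ of the sets $\mathcal{J}(\Lp{n})\cap\Cl(n)$, $\mathcal{J}(\Lp{n})\cap\NP(n)$, $\mathcal{J}(\Lp{n})\cap\Ss(n)$, $\mathcal{J}(\Lp{n})\cap\NS(n)$, and $\mathcal{J}(\Lp{n})\cap\PP_l(n)$ for $1\leq l<n$; by the proof of Proposition~\ref{Prop_Irreducible_Lp} (or directly from the characterization in Lemma~\ref{lem:char-join-irr}), every join-irreducible partition of~$n$ falls into exactly one of the classes $\Cl(n)$, $\NP(n)$, $\Ss(n)$, $\NS(n)$, $\bigsqcup_{1\le l<n}\PP_l(n)$, so these five sets partition $\mathcal{J}(\Lp{n})$. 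Hence the total contribution of the first five pieces is exactly $|\mathcal{J}(\Lp{n})|$, which is precisely $|\eta(\mathcal{J}(\Lp{n}))|$ since~$\eta$ is injective. The remaining two pieces are $E_1(n+1)$, a singleton, contributing~$1$, and $E_2(n+1)$, whose cardinality is $\lfloor n/3\rfloor$ by Lemma~\ref{numpart}. Adding these up gives
\[
|\mathcal{J}(\Lp{n+1})|=|\mathcal{J}(\Lp{n})|+\left\lfloor\frac{n}{3}\right\rfloor+1
\]
for all $n\geq 3$.

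It then remains to check the base cases $n=1$ and $n=2$ by hand, since Theorem~\ref{constsup} is only stated for $n\geq 3$. One verifies directly from the lattices (or the table preceding the theorem) that $|\mathcal{J}(\Lp{1})|=0$, $|\mathcal{J}(\Lp{2})|=1$, and $|\mathcal{J}(\Lp{3})|=2$. The claimed recursion predicts $|\mathcal{J}(\Lp{2})|=0+\lfloor 1/3\rfloor+1=1$ and $|\mathcal{J}(\Lp{3})|=1+\lfloor 2/3\rfloor+1=2$, both of which match. Thus the recursion holds for every $n\geq 1$ starting from $|\mathcal{J}(\Lp{1})|=0$, as stated.

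I do not anticipate a genuine obstacle here: the theorem is essentially a corollary that repackages Theorem~\ref{constsup} together with the counting Lemma~\ref{numpart}. The only point requiring a little care is making explicit that the five ``$\eta$-pieces'' in Theorem~\ref{constsup} really do exhaust $\mathcal{J}(\Lp{n})$ with no overlap --- i.e.\ that each $\vee$-irreducible partition of~$n$ has exactly one of a cliff at~$1$, a non-slippery plateau at~$1$, a slippery step at~$1$, a non-slippery step at~$1$, or a slippery plateau at~$1$ of some length~$l$ --- which is immediate from Lemma~\ref{lem:char-join-irr}: type~A with $m=1$ gives a cliff, type~A with $m\geq 2$ a non-slippery plateau, type~C a cliff (if $m=1$) or non-slippery plateau (if $m\geq 2$), type~B with $m=1$ a slippery or non-slippery step, type~B with $m\geq 2$ a slippery plateau, and type~D a non-slippery step (if $m=1$) or slippery plateau (if $m\geq 2$). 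Beyond that, the proof is a one-line arithmetic assembly plus the small finite check of the base cases.
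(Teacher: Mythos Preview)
Your proof is correct and follows essentially the same route as the paper. The only cosmetic difference is that the paper appeals directly to Lemma~\ref{imeta} together with the injectivity of~$\eta$ to get $|\eta(\mathcal{J}(\Lp{n}))|=|\mathcal{J}(\Lp{n})|$, whereas you route the same count through the disjoint decomposition of Theorem~\ref{constsup} and explicitly check that the five $\eta$-classes exhaust $\mathcal{J}(\Lp{n})$; both arguments then add $|E_1(n{+}1)|=1$ and $|E_2(n{+}1)|=\lfloor n/3\rfloor$ from Lemma~\ref{numpart} and verify the base cases $n=1,2$ by hand.
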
		
 \begin{proof} For $n=1$ and $n=2$, we have
 		\begin{align*}
		|\mathcal{J}(\Lp{2})|=1=0+0+1=|\mathcal{J}(\Lp{1})|+\lfloor 1/3 \rfloor +1, \\
		|\mathcal{J}(\Lp{3})|=2=1+0+1=|\mathcal{J}(\Lp{2})|+\lfloor 2/3 \rfloor +1.
		\end{align*}
since $|\mathcal{J}(\Lp{1})|=0$. For $n\geq 3$, since~$\eta$ is injective, we have $|\eta(\mathcal{J}(\Lp{n}))|=|\mathcal{J}(\Lp{n})|$, and Lemma~\ref{imeta} states that
		\begin{align*}
		|\mathcal{J}(\Lp{n+1})\setminus \eta(\mathcal{J}(\Lp{n}))|=|E_1(n+1) \sqcup E_2(n+1)|=|E_1(n+1)|+|E_2(n+1)|.
		\end{align*}
		Moreover, $|E_1(n+1)|=1$, and Lemma~\ref{numpart} yields ${|E_2(n+1)|=\lfloor n/3 \rfloor}$. Thus,
    \[
		|\mathcal{J}(\Lp{n+1})|=|\eta(\mathcal{J}(\Lp{n}))| + |\mathcal{J}(\Lp{n+1})\setminus \eta(\mathcal{J}(\Lp{n}))| 
		=|\mathcal{J}(\Lp{n})|+\left\lfloor \frac{n}{3} \right\rfloor+1.\mbox{\qed}
		\]
	\end{proof}

From the last theorem, we can get a closed formula for $|\mathcal{J}(\Lp{n+1})|$, which gives us a clearer picture of the cardinality of $|\mathcal{J}(\Lp{n+1})|$ in terms of~$n$.
\begin{corollary}\label{cor:number-join-irr}
		For all $n\geq 1$ we have
		\begin{align}\label{formula}
		|\mathcal{J}(\Lp{n+1})|=n \left(\left\lfloor \frac{n}{3} \right\rfloor+1\right)-\frac{3}{2}\left\lfloor \frac{n}{3} \right\rfloor^{2}-\frac{1}{2}\left\lfloor \frac{n}{3} \right\rfloor.
		\end{align}
	\end{corollary}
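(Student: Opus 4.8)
The plan is to unfold the recursion from Theorem~\ref{thm:recursion} into a finite sum and then evaluate that sum in closed form. Since $|\mathcal{J}(\Lp{1})|=0$, telescoping the recursion gives
\[
|\mathcal{J}(\Lp{n+1})|=\sum_{k=1}^{n}\Bigl(|\mathcal{J}(\Lp{k+1})|-|\mathcal{J}(\Lp{k})|\Bigr)=\sum_{k=1}^{n}\left(\left\lfloor\frac{k}{3}\right\rfloor+1\right)=n+\sum_{k=1}^{n}\left\lfloor\frac{k}{3}\right\rfloor,
\]
so everything reduces to finding a closed expression for $T(n):=\sum_{k=1}^{n}\lfloor k/3\rfloor$.

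To evaluate $T(n)$ I would group the summands into consecutive blocks of three: for each $j\geq 0$ the indices $k=3j+1,3j+2,3j+3$ contribute $j+j+(j+1)=3j+1$. Writing $n=3q+r$ with $q=\lfloor n/3\rfloor$ and $r\in\set{0,1,2}$, the complete blocks are those with $0\leq j\leq q-1$, contributing $\sum_{j=0}^{q-1}(3j+1)=\tfrac32 q^2-\tfrac12 q$, while the remaining $r$ terms $k=3q+1,\dots,3q+r$ each equal $q$ and contribute $rq$. Substituting $rq=(n-3q)q=nq-3q^2$ gives $T(n)=nq-\tfrac32 q^2-\tfrac12 q$, and therefore $|\mathcal{J}(\Lp{n+1})|=n+nq-\tfrac32 q^2-\tfrac12 q=n(q+1)-\tfrac32 q^2-\tfrac12 q$, which is precisely~\eqref{formula}. (The case $q=0$, i.e.\ $n\in\set{1,2}$, is covered as well, with an empty block sum.)

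An equivalent route, bypassing the substitution $n=3q+r$, is to verify~\eqref{formula} directly by induction on $n$ using Theorem~\ref{thm:recursion}: writing $f(n)$ for the right-hand side of~\eqref{formula}, one checks the base value $f(0)=0=|\mathcal{J}(\Lp{1})|$ and then establishes $f(n)-f(n-1)=\lfloor n/3\rfloor+1$ for all $n\geq 1$. This identity splits into the easy case $\lfloor(n-1)/3\rfloor=\lfloor n/3\rfloor$, where all $\lfloor\cdot\rfloor$-dependent terms cancel and only the linear contribution $\lfloor n/3\rfloor+1$ survives, and the case $3\mid n$, where $\lfloor(n-1)/3\rfloor=\lfloor n/3\rfloor-1$ and a short direct computation closes the gap.

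There is no genuine obstacle here: once Theorem~\ref{thm:recursion} is available, the statement is a routine summation. The only point requiring care is the bookkeeping around the floor function --- specifically, tracking the incomplete last block of three (equivalently, the residue $r=n-3\lfloor n/3\rfloor$) --- and this is exactly what is responsible for the quadratic term $\lfloor n/3\rfloor^2$ in the closed formula.
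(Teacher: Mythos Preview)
Your proof is correct and follows essentially the same approach as the paper: telescope the recursion from Theorem~\ref{thm:recursion} to reduce to evaluating $\sum_{k=1}^{n}\lfloor k/3\rfloor$, then write $n=3q+r$ with $q=\lfloor n/3\rfloor$ and split the sum into a complete part worth $\tfrac32 q^2-\tfrac12 q$ and a remainder worth $rq=nq-3q^2$. The only cosmetic difference is how the complete part is grouped (your blocks $k=3j+1,3j+2,3j+3$ versus the paper's straightforward enumeration $0+0+0+1+1+1+\cdots+q$), and your added remark about the inductive alternative is a valid but unneeded extra.
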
	
	\begin{proof}
		By Theorem~\ref{thm:recursion} we have
		$|\mathcal{J}(\Lp{i+1})|-|\mathcal{J}(\Lp{i})|=\left\lfloor \dfrac{i}{3} \right\rfloor+1$
		for  $i\geq 1$. Thus,
    \begin{equation*}
|\mathcal{J}(\Lp{n+1})|=|\mathcal{J}(\Lp{n+1})|-|\mathcal{J}(\Lp{1})|=\sum_{i=1}^{n} (|\mathcal{J}(\Lp{i+1})|-|\mathcal{J}(\Lp{i})|)
 =n+\sum_{i=1}^{n} \left\lfloor \dfrac{i}{3} \right\rfloor
    \end{equation*}
		because $|\mathcal{J}(\Lp{1})|=0$. Let us calculate $\sum_{i=1}^{n} \lfloor i/3 \rfloor$. Put $q=\left\lfloor n/3 \right\rfloor$ and $r=n-3q$. Hence, $n=3q+r$ with $0\leq r<3$, and we have $\sum_{i=1}^{n} \lfloor i/3 \rfloor=\sum_{i=0}^{n} \lfloor i/3 \rfloor=u+v$, where
		\begin{align*}
		u&=\sum_{i=0}^{3q} \left\lfloor \dfrac{i}{3} \right\rfloor
     =0+0+0+1+1+1+\ldots+(q-1)+(q-1)+(q-1)+q \\
     &=3(1+\ldots+(q-1))+q=3\dfrac{q(q-1)}{2}+q=\dfrac{3}{2}q^{2}-\dfrac{1}{2}q,\\
		v&=\sum_{i=1}^{r} \left\lfloor \dfrac{3q+i}{3} \right\rfloor 
     =\underbrace{q+\ldots+ q}_{r \text{ times}}=rq=(n-3q)q=nq-3q^2.
		\end{align*}
		Thus, $
		|\mathcal{J}(\Lp{n+1})|=n+\frac{3}{2}q^{2}-\frac{1}{2}q+nq-3q^2=n(q+1)-\frac{3}{2}q^2-\frac{1}{2}q.\mbox{\qed}$
    
	\end{proof}
	
An alternative proof of the main technical result Corollary \ref{cor:number-join-irr}, written in parallel, was given independently by Bernhard Ganter~\cite{Ganter20} (not using methods from~\cite{LATAPY}).\par 
To obtain the standard context $\mathbb{K}(\Lp{n+1})$ from $\mathbb{K}(\Lp{n})$ we do the following. First, we construct the objects, i.e., we construct $\mathcal{J}(\Lp{n+1})$ from $\mathcal{J}(\Lp{n})$ as Theorem~\ref{constsup} shows. Second, we calculate $\mathcal{J}(\Lp{n+1})^{*}$ in order to obtain the attributes of $\mathbb{K}(\Lp{n+1})$. Finally, we fill the cross table using the dominance ordering as the incidence relation between objects and attributes. This can be done efficiently:

\begin{corollary}\label{cor:std-context-algo}
The set $\mathcal{J}(\Lp{n})$ can be obtained in time $\Theta(n^3)$, and
the standard context $\mathbb{K}(\Lp{n})$ can be produced in time $\Theta(n^4)$.
\end{corollary}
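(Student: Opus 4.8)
The plan is to establish the two bounds separately and, in both cases, to generate the join-irreducibles directly from Brylawski's description in Lemma~\ref{lem:char-join-irr} rather than by iterating the recursion of Theorem~\ref{constsup}: climbing from $\Lp{1}$ up to $\Lp{n}$ that way would already cost $\Theta(n^4)$, since a single step $\mathcal{J}(\Lp{k})\to\mathcal{J}(\Lp{k+1})$ must touch $\Theta(k^2)$ tuples of combined length $\Theta(k^3)$. I work in the unit-cost RAM model and represent a partition of $n$ by the tuple of its nonzero parts; dominance will be tested on the induced sequence of partial sums.

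For the first claim, the algorithm runs through the four parameter families of Lemma~\ref{lem:char-join-irr}. For Type~D, say, one loops over $k\geq 2$, $m\geq 1$ and $l\geq 1$, puts $s:=n-(k+2)m-(k+1)l$, and outputs $\tuptres{k+2}{m}{k+1}{m+l}{1}{m+l+s}$ whenever $s\geq 1$ (Types~A, B, C are analogous and cheaper). Even with naive ranges the number of loop iterations is $O(n^3)$ — it drops to $O(n^2)$ if one tightens the nested bounds — and emitting an irreducible of length $\ell$ costs $\Theta(\ell)$. Since $|\mathcal{J}(\Lp{n})|=\Theta(n^2)$ by Corollary~\ref{cor:number-join-irr} and every part-tuple has length at most $n$, the combined output has length $O(n^3)$, so the whole enumeration runs in $O(n^3)$. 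For the matching lower bound it suffices to restrict attention to the subfamily $\tuptres{4}{x}{3}{x+y}{1}{x+y+z}$ with $x,y,z\geq 1$, i.e.\ $4x+3y+z=n$: such a partition has length $x+y+z=n-3x-2y$, and summing this over the $\Theta(n^2)$ pairs $(x,y)$ with $4x+3y\leq n-1$ gives a total of order $n^3$; hence any algorithm that writes out $\mathcal{J}(\Lp{n})$ needs $\Omega(n^3)$ time.

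For the second claim, $|\mathcal{M}(\Lp{n})|=|\mathcal{J}(\Lp{n})|=\Theta(n^2)$ (partition conjugation is a lattice antiautomorphism, as noted above), so the cross table of $\mathbb{K}(\Lp{n})$ has $\Theta(n^2)$ rows and $\Theta(n^2)$ columns, i.e.\ $\Theta(n^4)$ entries; merely printing it forces the bound $\Omega(n^4)$. For the upper bound I would (i)~build the objects $\mathcal{J}(\Lp{n})$ as above; (ii)~produce the attributes $\mathcal{M}(\Lp{n})=\mathcal{J}(\Lp{n})^{*}$ by conjugating each object, at cost $O(n)$ per partition and $O(n^3)$ overall; and (iii)~fill each of the $\Theta(n^4)$ cells by a single dominance test. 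The crucial observation making step~(iii) affordable is that every join- or meet-irreducible of $n$ has at most three distinct part values — this is immediate for the join-irreducibles from Lemma~\ref{lem:char-join-irr} and follows for the meet-irreducibles by conjugating the four types — so the function sending an index $j$ to the $j$-th partial sum is non-decreasing and piecewise-linear with $O(1)$ breakpoints. The difference of two such functions is again piecewise-linear with $O(1)$ breakpoints, and an affine function on an interval is nonnegative on that interval precisely when it is nonnegative at the two endpoints; hence one dominance comparison reduces to evaluating the difference at its $O(1)$ breakpoints, which is $O(1)$ time once the constant-size breakpoint data of all $\Theta(n^2)$ irreducibles has been recorded. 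Filling the whole table therefore costs $O(n^4)$.

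The step I expect to be the real obstacle is exactly~(iii): the naive dominance test compares two prefix-sum vectors of length $\Theta(n)$ coordinate by coordinate and so takes $\Theta(n)$ time, which would inflate the context bound to $\Theta(n^5)$. Collapsing each comparison to $O(1)$ by exploiting the rigid shape of the (co-)irreducibles is what makes the $\Theta(n^4)$ bound attainable. The remaining work — checking that each of the four types and their conjugates really yields a partial-sum function with only $O(1)$ corners, keeping the enumeration within $O(n^3)$ by choosing the loop bounds with care, and the conjugation of step~(ii) — is routine bookkeeping.
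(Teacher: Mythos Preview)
Your argument is correct and in several places more careful than the paper's own, but it follows a genuinely different route.

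For the first bound, the paper does precisely what you warn against: it climbs from $\mathcal{J}(\Lp{k_0})$ up to $\mathcal{J}(\Lp{n})$ via the recursion of Theorem~\ref{constsup}, charging $\Theta(k^2)$ work at level~$k$ and summing to $\Theta(n^3)$. Your objection that this really costs $\Theta(n^4)$ is valid under your chosen representation (full tuples of nonzero parts); the paper's bookkeeping only goes through if each irreducible is kept in the compact $O(1)$ record $(\text{type};k,m,l,s)$, so that applying~$\eta$ and reclassifying is $O(1)$ per partition, and the paper leaves this representational assumption implicit. Your direct enumeration from Lemma~\ref{lem:char-join-irr} sidesteps the issue entirely and, as a bonus, furnishes an explicit $\Omega(n^3)$ output-size lower bound (via the Type~D subfamily with first part~$4$), which the paper does not argue at all.

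For the second bound the outline---build $\mathcal{J}(\Lp{n})$, conjugate to get $\mathcal{M}(\Lp{n})$, then fill the $\Theta(n^2)\times\Theta(n^2)$ table---is the same in both, but you go substantially further. The paper simply asserts that writing the $\Theta(n^4)$ cells costs $\Theta(n^4)$ time, never explaining how a single dominance comparison is $O(1)$ rather than $\Theta(n)$. Your piecewise-linear breakpoint argument, resting on the observation that every (co-)irreducible has at most three distinct part values, is exactly the ingredient needed to keep the bound at $\Theta(n^4)$ instead of $\Theta(n^5)$; it supplies what the paper's proof tacitly assumes.
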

\begin{proof}
With constant effort we create $\mathcal{J}(\Lp{k_0})$ for an initial value $k_0\in\Nplus$ and sort its partitions according to the classes occurring in Theorem~\ref{constsup}. We then iterate over the $\Theta(k^2)$ partitions in~$\mathcal{J}(\Lp{k})$ (see Corollary~\ref{cor:number-join-irr}) plus the~$\Theta(k)$ exceptional partitions (see Lemma~\ref{numpart}) to obtain $\mathcal{J}(\Lp{k+1})$ from $\mathcal{J}(\Lp{k})$ using the recursion in Theorem~\ref{constsup}. As part of this process we divide the resulting partitions of~$k+1$ into the classes of Theorem~\ref{constsup} to prepare for the next step. We do this for $k_0\leq k <n$, taking $\sum_{k=k_0}^{n-1} \Theta(k^2)$, i.e., $\Theta(n^3)$ time units.
\par
We then iterate once over the~$\Theta(n^2)$ objects in $\mathcal{J}(\Lp{n})$ to get the attributes $\mathcal{J}(\Lp{n})^{*}$. Then for each object we iterate over these $\Theta(n^2)$ attributes to write the cells of the cross table, taking $\Theta(n^4)$ time units.\qed
\end{proof}
	
  The procedure for $\mathbb{K}(\Lp{n})$ described in the proof of Corollary~\ref{cor:std-context-algo} is near-optimal as already writing a non-trivial context of size $\Theta(n^2)\times\Theta(n^2)$ needs $\lo(n^4)$ individual steps.
\par
The standard context $\mathbb{K}(\Lp{n})$ is of our interest because its concept lattice $\mathfrak{B}(\mathbb{K}(\Lp{n}))$ is isomorphic to~$\Lp{n}$. From equation~\eqref{formula} we have that the number of objects (and attributes) of $\mathbb{K}(\Lp{n})$ has order $\Theta(n^2)$. This means that, as~$n$ increases, the number of objects (and attributes) in the context $\mathbb{K}(\Lp{n})$ grows much more slowly than the number~$p(n)$ of unrestricted partitions of~$n$, which satisfies the asymptotic formula~\cite{hardy1918}:
	\begin{align*}
	p(n)\sim \dfrac{1}{4n\sqrt{3}}e^{ \pi \sqrt{2n/3}}.
	\end{align*}
	
	It is quite remarkable (though from a formal concept analysis perspective not too surprising) that from a fraction of the partitions of~$n$, we can construct a context  $\mathbb{K}(\Lp{n})$ the size of which is `only' of order $\mathcal{O}(n^4)$ but whose number of formal concepts is precisely the total number of partitions of~$n$.

Asymptotic expansions~\cite{Rademacher1937,Rademacher1943} (and even exact numbers for many integers~$n$, see~\cite{OEISA000041}) for the sizes~$p(n)$ of the lattices $\mathfrak{B}(\mathbb{K}(\Lp{n}))\cong \Lp{n}$ are known. Moreover, by Corollary~\ref{cor:number-join-irr}, the standard contexts $\mathbb{K}(\Lp{n})$ satisfy precise size estimates $\mathcal{O}(n^4)$ and can be computed efficiently, that is, in polynomial time~$\Theta(n^4)$, see Corollary~\ref{cor:std-context-algo}. Therefore, in addition to being a novel, perhaps slightly esoteric, means for the computation of~$p(n)$, the sequence $\apply{\mathbb{K}(\Lp{n})}_{n\in\Nplus}$ of contexts might also prove itself to be a promising playground for testing conjectures or the efficiency of new formal concept analytic algorithms (regarding e.g.\ the computation of concept lattices or stem bases etc.).
%
%
%
%

\end{document}